\newtheorem{prop}{Proposition}[section]
\newtheorem{lem}[prop]{Lemma}
\newtheorem{theo}[prop]{Theorem}
\theoremstyle{definition}
\newtheorem{defi}{Definition}[section]
\newtheorem{eg}[defi]{Example}
\newtheorem{rem}[defi]{Remark}
\newcommand{\Coll}{\mathsf{Coll}}
\newcommand{\PQ}{\overline{P \cup Q}}
\newcommand{\bR}{\mathbb{R}}
\newcommand\bx{\mathbf{x}}
\newcommand\by{\mathbf{y}}
\newcommand{\on}{\operatorname}
\newcommand{\Irr}{{\on{Irr}}}
\title{Constrainahedra}
\author{Nathaniel Bottman}
\address{Max Planck Institute for Mathematics,
Vivatsgasse 7, 53111 Bonn, Germany}
\email{\href{mailto:bottman@mpim-bonn.mpg.de}{bottman@mpim-bonn.mpg.de}}
\author{Daria Poliakova}
\address{Max Planck Institute for Mathematics,
Vivatsgasse 7, 53111 Bonn, Germany
\break\indent
Center for Quantum Mathematics, University of Southern Denmark, Campusvej 55, Odense M, DK-5230, Denmark}
\email{\href{mailto:polydarya@gmail.com}{polydarya@gmail.com}}
\begin{document}
\maketitle

\begin{abstract}
We define a family of convex polytopes called \emph{constrainahedra}, which index collisions of horizontal and vertical lines.
Our construction proceeds by first defining a poset $C(m,n)$ of \emph{good rectangular preorders}, then proving that $C(m,n)$ is a lattice, and finally constructing a polytopal realization by taking the convex hull of a certain explicitly-defined collection of points.
The constrainahedra will form the combinatorial backbone of the second author's construction of \emph{strong homotopy duoids}.
We indicate how constrainahedra could be realized as Gromov-compactified configuration spaces of horizontal and vertical lines; viewed from this perspective, the constrainahedra include naturally into the first author's notion of 2-associahedra.
\end{abstract}

\section{Introduction}

In this paper, we define a family of polytopes called \emph{constrainahedra}, which generalize associahedra and multiplihedra.
The intended application of the constrainahedra is the definition of strong homotopy duoids.
While this definition is still in progress in work of the second author, the constrainahedra are natural objects that are of independent interest (c.f.\ e.g.\ \cite{chapoton_pilaud}).

Consider a configuration of $n$ horizontal and $m$ vertical lines, depicted here with $n=3, m=5$:

\begin{center}
\begin{tikzpicture}

\draw[thick] (0,0) -- (0,-4);
\draw[thick] (1,0) -- (1,-4);
\draw[thick] (2,0) -- (2,-4);
\draw[thick] (3,0) -- (3,-4);
\draw[thick] (4,0) -- (4,-4);

\draw[thick] (-1,-1) -- (5,-1);
\draw[thick] (-1,-2) -- (5,-2);
\draw[thick] (-1,-3) -- (5,-3);

\draw[decorate,decoration={brace,amplitude=15pt}, yshift = 3pt] (0,0) -- (4,0) node[midway,yshift = 23pt] {$m = 5$};

\draw[decorate,decoration={brace,amplitude=15pt}, xshift = -3pt] (-1,-3) -- (-1,-1) node[midway,xshift = -33pt] {$n = 3$};

\end{tikzpicture} 
\end{center}

\noindent
In \S\ref{sec:main_def}, we define posets $C(m,n)$ which index collisions of horizontal and vertical lines in the arrangement we have just depicted.
The constrainahedra are polytopal realizations of these posets; we abuse notation and refer to both $C(m,n)$ and its polytopal realization as the \emph{$(m,n)$-th constrainahedron}.
In \S\ref{sec:lattice}, we prove that $C(m,n)$ is a lattice, and in \S\ref{sec:polytopes}, we construct a polytopal realization of $C(m,n)$ by taking the convex hull of certain points.
Finally, in \S\ref{sec:comparison}, we compare this polytopal realization to a different realization that comes from work of Chapoton--Pilaud (c.f.\ also \S\ref{ss:history}).

\subsection{Towards strong homotopy duoids}
Recall from \cite{batanin_markl} that a {\em duoidal category} is a category $C$ equipped with two associative tensor products 
\begin{align}
\otimes
\colon
C \times C \to C,
\qquad
\boxtimes\colon
C \times C \to C
\end{align}
with (probably different) units for $\otimes$ and $\boxtimes$, and a natural (not necessarily invertible) morphism 
\begin{align}
(A \boxtimes B) \otimes  (C \boxtimes D) \to (A \otimes C) \boxtimes (B \otimes D)
\end{align}

An example of a duoidal category is the category of bimodules over a Hopf algebra $A$, where the first tensor product is the bimodule tensor product over $A$, and the second tensor product is induced by comultiplication. 

In a duoidal category, a {\em duoid} is an object $V$ equipped with two associative multiplications $V \otimes V \to V$ and $V \boxtimes V \to V$, such that the following commutes

\[
\begin{tikzcd}
 (V \boxtimes V) \otimes (V \boxtimes V) \arrow{r} \arrow{d}  & V \otimes V \arrow{dd} \\
 (V \otimes V) \boxtimes (V \otimes V) \arrow{d} & \\
 V \boxtimes V \arrow{r} & V
\end{tikzcd}
\]

\noindent
A strict monoidal category is an example of a duoid (in a duoidal category of above type). Now suppose that our duoidal category is a DG-category.
Similarly to how the notion of monoid can be weakened to the notion of strong homotopy monoid by means of associahedra, one may ask what is a strong homotopy duoid.
This question is still yet to be answered, but the current paper represents the first step: indeed, we intend constrainahedra to do for duoids what associahedra do for monoids.
Once we have defined strong homotopy duoids, they will give a very explicit model for weak monoidality.

\subsection{Constrainahedra as Gromov-compactified configuration spaces}
\label{ss:gromov}

In a series of papers \cite{bottman:figure_eight, 
bottman_wehrheim,
bottman:2-associahedra,
bottman:realization,
bottman_carmeli}, the first author and his collaborators have developed a theory of functoriality for the Fukaya category.
This theory is based on a family of abstract polytopes called \emph{2-associahedra}, which index degenerations in configuration spaces of vertical pointed lines in $\bR^2$, modulo translations and dilations.
In \cite{bottman:realization}, the first author showed that these posets can be realized as stratified topological spaces, by considering Gromov compactifications of the configuration spaces mentioned in the previous sentence.
We expect that the constrainahedra can be realized in a completely analogous way, and in future work we plan to construct such a realization.
The interior of this realization is easy to describe: it is the configuration space of grids in $\bR^2$, modulo translations in both coordinates and dilations that scale both coordinates equally.

\begin{rem}
If we considered instead the direct product of two associahedra, the interior would be identical except that the dilations we quotient by would not be required to scale both coordinates equally.
(C.f. \S\ref{sec:comparison}.)
\null\hfill$\triangle$
\end{rem}

\subsection{The history of the constrainahedra}
\label{ss:history}

We now briefly comment on the history of the construction of the constrainahedra.
The first description of the constrainahedra appeared in a 2016 private communication from the second author to Patrick Tierney.
The constrainahedra appeared in Tierney's bachelor thesis \cite{tierney}.
As of 2016, the constrainahedra had only been defined in very rough terms.
This remained the case for several years, because the first author did not see a use for the constrainahedra in symplectic geometry.

During her PhD, the second author realized that the constrainahedra should form the combinatorial backbone of the notion of a strong homotopy duoid.
This motivated her to define the constrainahedra in a precise fashion.
Following her construction, Chapoton--Pilaud produced an alternate construction in \cite{chapoton_pilaud}, as a special case of a more general notion of \emph{shuffle products of generalized permutahedra}.
In this language, the constrainahedra are the result of shuffling the associahedra with themselves.

\subsection{Acknowledgements}

We are grateful to Sergey Arkhipov, Spencer Backman, Richard Stanley, and James Stasheff for useful conversations.
In particular, the vertex coordinates that we define in \S\ref{sec:polytopes} came out of ongoing joint work with Spencer Backman.

N.B.\ was supported by an NSF Standard Grant (DMS-1906220) during the preparation of this article.
N.B.\ and D.P.\ are grateful to the Max Planck Institute for Mathematics in Bonn for its hospitality and financial support.
D.P.\ acknowledges the support of the Danish National Research Foundation (Grant DNRF157).

\section{Main definition}
\label{sec:main_def}

In this section, we will define the poset $C(m,n)$.

\subsection{Good rectangular preorders}

Definition~\ref{def:constrainahedra}, in which we define constrainahedra, will rely crucially on the notion of \emph{good rectangular preorders}.
Toward this notion, we fix some notation.
Denote by $L_1, \ldots, L_n$ the horizontal lines and by $M_1, \ldots, M_m$ the vertical lines.
Let $\Coll(m,n)$ be the finite set consisting of elements $m_i$ for $1 \leq i < m$ and $l_i$ for $1 \leq j < n$.
The element $m_i$ will be formally called {\em the vertical collision between lines $M_i$ and $M_{i+1}$}, and the element $l_j$ will be formally called {\em the horizontal collision between lines $L_j$ and $L_{j+1}$}.

Our main objects of study
will be certain {\em preorders} on the set $\Coll(m,n)$. Recall the following definition.

\begin{defi}
A {\em preorder} on a set $X$ is a binary relation that is reflexive and transitive, but not necessarily anti-symmetric.
\null\hfill$\triangle$
\end{defi}

\noindent
In an ordered set, two distinct elements can satisfy one of the following: $x<y$, $x>y$, or $x\#y$ (incomparable).
In a preordered set, there is also a fourth possibility: $x \equiv y$ (corresponding to $x \leq y$ and $y \leq x$ both holding, which would be impossible for two distinct elements in an ordered set).
We now say that $x$ and $y$ are comparable if $x<y$ or $x>y$ or $x \equiv y$.

We will deal with certain preorders on the set $\Coll(m,n)$, called {\em rectangular} preorders.
Let us fix the following terminology.
\begin{itemize}
    \item
    Collisions $m_i$ and $l_j$ are {\em orthogonal}.
    
    \item
    Collisions $m_i$ and $m_{i'}$ (or $l_j$ and $l_{j'}$) are called {\em parallel}.
    
    \item
    For a fixed preorder, a collision $l_s$ is called an {\em orthogonal link} between two parallel collisions $m_i$ and $m_j$ if $m_i \leq l_s \leq m_j$.
    (The definition of an orthogonal link between $l_i$ and $l_j$ is similar.)
    
    \item
    A collision $m_s$ is called a {\em gap} between two parallel collisions $m_i$ and $m_j$ if $s \in [i,j]$ and $m_i < m_s > m_j$.
    (The definition of a gap between $l_i$ and $l_j$ is similar.)
\end{itemize}

We are now ready to give the main definition.

\begin{defi}
A rectangular preorder on $\Coll(m,n)$ is {\em good} if it satisfies:

\begin{enumerate}
    \item
    {\sc (Orthogonal comparability) }
    Orthogonal collisions are always comparable.
    
    \item
    {\sc (Parallel comparability) }
    Parallel collisions are comparable if and only if at least one of the following holds:
    
    \begin{itemize}
        \item
        There is an orthogonal link between them.
        
        \item
        There is no gap between them.
        \null\hfill$\triangle$
    \end{itemize}
\end{enumerate}
\end{defi}

Having a good rectangular preorder, we read $x < y$ as ``collision $x$ happened earlier then collision $y$, we read $x \equiv y$ as ``collision $x$ happened simultaneously with collision $y$,'' and we read $x \# y$ as ``collision $x$ happened far from collision $y$'' (so we have no idea which of them was first).

\begin{eg}
\label{preord}
This is a good rectangular preorder on the set $\Coll(3,5)$, depicted in terms of its Hasse diagram:

\begin{center}
\begin{tikzpicture}
\node (1) {$l_2$};
\node (2) [below of = 1] {$m_2$};
\node (the) [below of = 2] {};
\node (3) [left of = the] {$m_1$};
\node (4) [right of = the] {$m_3$};
\node (5) [below of = the] {$l_1$};
\node (6) [right of = 4] {$m_4$};

\draw[thick] (1) -- (2);
\draw[thick] (2) -- (3);
\draw[thick] (2) -- (4);
\draw[thick] (4) -- (5);
\draw[thick] (3) -- (5);
\draw[thick, double] (4) -- (6);

\end{tikzpicture}
\end{center}

\noindent
According to this preorder, in particular, $m_2$ happened earlier than $l_2$, $m_1$ and $m_3$ happened far from each other, and $m_3$ and $m_4$ happened simultaneously. 
\null\hfill$\triangle$
\end{eg}

For two rectangular preorders $P_1$ and $P_2$, we say $P_1 \leq P_2$ if $P_1$ refines $P_2$.
(This condition is equivalent to the statement that if $x \leq_{P_1} y$, then $x \leq_{P_2} y$.)
If we view preorders as subsets of $\Coll(m,n) \times \Coll(m,n)$ consisting of pairs $(x,y)$ such that $x \leq y$, then refinement corresponds to the opposite of inclusion.

We now introduce the main definition of this paper.

\begin{defi}
\label{def:constrainahedra}
The constrainahedron $C(m,n)$ is the poset of good rectangular preorders on $\Coll (m,n)$.
\null\hfill$\triangle$
\end{defi}

The constructed family of posets generalizes two known families.

\begin{theo}
Posets $C(1,n) = C(n,1)$ coincide with face posets of associahedra, and $C(2,n) = C(n,2)$ coincide with face posets of multiplihedra.
\end{theo}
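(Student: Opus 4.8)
The plan is to establish three things in turn: the transposition symmetry $C(m,n) \cong C(n,m)$, which yields the two equalities in the statement; the identification of $C(1,n)$ with the face poset of the associahedron $K_n$; and the identification of $C(2,n)$ with the face poset of the multiplihedron $J_n$. Throughout I would use the standard descriptions of the faces of $K_n$ and $J_n$ in terms of (painted) planar rooted trees.

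\emph{Transposition symmetry.} Interchanging the horizontal and vertical lines gives a bijection $\Coll(m,n) \to \Coll(n,m)$, $m_i \mapsto l_i$ and $l_j \mapsto m_j$, which takes orthogonal pairs to orthogonal pairs and parallel pairs to parallel pairs. Because the notions of \emph{orthogonal link} and of \emph{gap} are manifestly symmetric in the two families, this bijection sends good rectangular preorders to good rectangular preorders and respects refinement, hence induces a poset isomorphism $C(m,n) \cong C(n,m)$. It therefore suffices to treat $C(1,n)$ and $C(2,n)$.

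\emph{The associahedron.} Here $\Coll(1,n) = \{l_1, \dots, l_{n-1}\}$ consists of mutually parallel collisions only, so axiom (1) is vacuous and axiom (2) reduces to: $l_i$ and $l_j$ are comparable if and only if no gap separates them. I would match $C(1,n)$ with the face poset of $K_n$ in the model whose faces are the planar rooted trees with ordered leaves $L_1, \dots, L_n$ and all internal vertices of arity $\ge 2$, ordered by refinement (the corolla being the top cell and the binary trees the vertices). To a tree $T$, associate the preorder with $l_i \le l_j$ precisely when the lowest common ancestor of $L_i, L_{i+1}$ is a descendant of (or equal to) that of $L_j, L_{j+1}$; transitivity is immediate, and a short argument with leaf-intervals shows that these two ancestors are incomparable exactly when a gap separates $l_i$ from $l_j$, so the preorder is good. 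Conversely, from a good preorder $P$ one recovers $T$ recursively: the $\le_P$-maximal elements are pairwise $\equiv_P$ (if $l_i$ and $l_j$ were incomparable, goodness would force a gap above both, so neither could be maximal), deleting them cuts $L_1, \dots, L_n$ into $\ge 2$ consecutive blocks which become the children of the root, and $P$ restricted to the collisions internal to a block is again good. It then remains to check that these two assignments are mutually inverse and order-preserving; this is routine but uses the goodness axioms at each step.

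\emph{The multiplihedron, and the main difficulty.} Now $\Coll(2,n) = \{m_1\} \cup \{l_1, \dots, l_{n-1}\}$, and axiom (1) makes $m_1$ comparable to every $l_j$; the element $m_1$ plays the role of the level at which the morphism is applied. Using the standard description of faces of $J_n$ as painted trees — planar rooted trees on $L_1, \dots, L_n$ with all internal vertices of arity $\ge 2$, together with a paint line separating a lower forest from an upper tree and allowed to pass through vertices — one reads off from a good preorder $P$ the underlying tree and its paint by a recursion analogous to, but more intricate than, the one above, at each step also recording whether the relevant collision lies below $m_1$, level with $m_1$, or above $m_1$. The crucial point is that the \emph{orthogonal link} clause of axiom (2) is precisely what makes this well defined: two parallel collisions on opposite sides of (or level with) $m_1$ are forced comparable even when a gap separates them, matching the fact that in a painted tree the relative order of two vertices straddling the paint is already determined, while conversely the comparability of $m_1$ with every $l_j$ pins down the paint line. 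The transposition symmetry and the associahedron case are essentially bookkeeping; the real work, and the main obstacle, is this multiplihedron case — pinning down the exact dictionary between good preorders on $\Coll(2,n)$ and painted trees, especially the treatment of collisions $\equiv_P m_1$ (vertices lying exactly on the paint line, which behave differently from the rest), and then verifying that refinement of preorders matches the face order on $J_n$, which presupposes having the combinatorial face poset of the multiplihedron in the precise form used.
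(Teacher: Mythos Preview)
Your proposal is correct and follows essentially the same route as the paper. Both arguments identify the inner vertex $v(l_i)$ of a planar tree with the lowest common ancestor of leaves $L_i$ and $L_{i+1}$, read off the preorder from the ancestry relation among these vertices, and in the multiplihedron case use comparability with the single orthogonal collision to determine the painting; your explicit treatment of the transposition symmetry and your recursive reconstruction of the tree from the $\le_P$-maximal class are minor elaborations on what the paper leaves implicit or phrases via bracketings, but the underlying dictionary is the same.
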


\begin{proof}
To pass from $C(n,1)$ to associahedra, use the labelling of faces by planar trees. Consider a planar tree with $n$ leaves.
To obtain a preorder on $\Coll(n,1)$, we associate to each $m_i$ an inner vertex $v(m_i)$ located between leaves $i$ and $i+1$.
Then we say $m_i \leq m_{i+1}$ if there is a descending path from $v(m_i)$ to $v(m_{i+1})$.
{\sc (Parallel comparability)} corresponds to inner vertices being comparable if and only if they belong to the same branch.

For the other direction, use the labelling of faces by bracketings of lines $M_1$, $\ldots$, $M_n$. Having a good rectangular preorder, for each collision $m_i$ add a bracket embracing the lines that have collided through collisions that happened earlier than $m_i$.
{\sc (Parallel comparability)} ensures that these brackets are well-defined. The procedures described above are inverse to each other.

To compare $C(n,2)$ with multiplihedra, consider some preorder on $\Coll(n,2)$.
Restricting to $\Coll(n,2) \setminus \{l_1 \}$ and forgetting the comparisons that existed due to an orthogonal link, we obtain a planar tree as explained above.
But now for every inner vertex (corresponding to a collision $m_i$) we have an extra piece of data: whether it happened before $l_1$, simultaneously with $l_1$ or after $l_1$ (by {\sc (orthogonal comparability)}, all $m_i$ are comparable to $l_1$).
This gives the painting (recall that due to Forcey \cite{forcey} faces of multiplihedra are represented by painted trees). 
\end{proof}

\subsection{Anna \& Bob metaphor for preorders}
Informally, a good rectangular preorder corresponds to an account of a collision happening in discrete linear time, but with some data lost due to limitations of observation.
Let us put an observer on each of the lines except for boundary lines, and additionally let us put an observer in each of the squares.
For $2 \times 4$ case, we need two Annas to sit on $M_2$ and $M_3$, and three Bobs to sit in the squares:

\begin{center}
\begin{tikzpicture}
\draw[thick] (0,-1) -- (0,-6);
\node at (0,-0.5) {$M_1$};
\draw[thick] (2,-1) -- (2,-6) node [near end, below, circle, draw, fill = lime, opacity = 1] {Anna-1};
\node at (2,-0.5) {$M_2$};
\draw[thick] (4,-1) -- (4,-6) node [near end, below, circle, draw, fill = lime, opacity = 1] {Anna-2};;
\node at (4,-0.5) {$M_3$};
\draw[thick] (6,-1) -- (6,-6);
\node at (6,-0.5) {$M_4$};

\draw[thick] (-2,-2) -- (8,-2);
\node at (-2.5,-2) {$L_1$};
\draw[thick] (-2,-4) -- (8,-4);
\node at (-2.5,-4) {$L_2$};

\node[circle, draw, fill = cyan, opacity = 1] at (1,-3) {Bob-1};
\node[circle, draw, fill = cyan, opacity = 1] at (3,-3) {Bob-2};
\node[circle, draw, fill = cyan, opacity = 1] at (5,-3) {Bob-3};
\end{tikzpicture}
\end{center}

Assume that the observers only see things locally and can record events, but cannot keep track of time when nothing is happening.
Let our team observe the following gradual collisions:

\begin{itemize}
    \item
    In the first case, $m_1$ happens at moment $1$, then $l_1$ happens at moment $2$, then $m_3$ happens at moment $3$ and finally $m_2$ happens in moment $4$.
    
    \medskip
    
    \item
    In the second case, moment $1$ and $2$ change places.
\end{itemize}

\begin{center}
\begin{tikzpicture}

\draw[thick] (0,0) -- (0,-3);
\node at (0,0.5) {$M_1$};
\draw[thick] (1,0) -- (1,-3);
\node at (1,0.5) {$M_2$};
\draw[thick] (2,0) -- (2,-3);
\node at (2,0.5) {$M_3$};
\draw[thick] (3,0) -- (3,-3);
\node at (3,0.5) {$M_4$};

\draw[thick] (-1,-1) -- (4,-1);
\node at (-1.5,-1) {$L_1$};
\draw[thick] (-1,-2) -- (4,-2);
\node at (-1.5,-2) {$L_2$};

\node[red] at (0.5,0) {1};
\node[red] at (1.5,0) {4};
\node[red] at (2.5,0) {3};

\node[red] at (-1,-1.5) {2};

\node at (1.5, 1.5) {Case 1};

\begin{scope}[shift = {(6,0)}]
\draw[thick] (0,0) -- (0,-3);
\node at (0,0.5) {$M_1$};
\draw[thick] (1,0) -- (1,-3);
\node at (1,0.5) {$M_2$};
\draw[thick] (2,0) -- (2,-3);
\node at (2,0.5) {$M_3$};
\draw[thick] (3,0) -- (3,-3);
\node at (3,0.5) {$M_4$};

\draw[thick] (-1,-1) -- (4,-1);
\node at (-1.5,-1) {$L_1$};
\draw[thick] (-1,-2) -- (4,-2);
\node at (-1.5,-2) {$L_2$};

\node[red] at (0.5,0) {2};
\node[red] at (1.5,0) {4};
\node[red] at (2.5,0) {3};

\node[red] at (-1,-1.5) {1};

\node at (1.5, 1.5) {Case 2};
\end{scope}

\end{tikzpicture} 
\end{center}

\noindent
In each of the cases, let the team meet after the end of time, and discuss.

In case 1, Anna-1 knows that $m_1$ happened earlier then $m_2$: sitting on $M_2$, she saw $M_1$ earlier then she saw $M_3$ and met with Anna-2.
Similarly, Anna-2 knows that $m_3$ happened earlier then $m_2$. Together they are unable to compare $m_1$ with $m_3$, but let Bobs join the discussion.
Bob-1 knows that $m_1$ happened earlier than $l_1$ and Bob-3 knows that $l_1$ happened earlier than $m_3$ (the input from Bob-2 isn't needed).
So together our observers can compare $m_1$ and $m_4$ through $l_1$ and come up with the correct (and full) account: 

\begin{center}
\begin{tikzpicture}
\node (m2) {$m_2$};
\node (m4) [below of = m2] {$m_3$};
\node (l1) [below of = m4] {$l_1$};
\node (m1) [below of = l1] {$m_1$};

\draw[thick] (m2) -- (m4);
\draw[thick] (m4) -- (l1);
\draw[thick] (l1) -- (m1);

\end{tikzpicture}
\end{center}

In case 2, Annas again agree that $m_1$ and $m_3$ were earlier than $m_2$.
However, at this time, the input from Bobs doesn't help the team to compare $m_1$ and $m_3$: all Bobs simply tell that $l_1$ was earlier than everything else. So the final account is the following preorder:

\begin{center}
\begin{tikzpicture}
\node (m2) {$m_2$};
\node (the) [below of = m2] {};
\node (m4) [right of = the] {$m_3$};

\node (m1) [left of = the] {$m_1$};
\node (l1) [below of = the] {$l_1$};

\draw[thick] (m2) -- (m4);
\draw[thick] (m2) -- (m1);

\draw[thick] (m1) -- (l1);
\draw[thick] (l1) -- (m4);

\end{tikzpicture}
\end{center}

This Anna \& Bob metaphor is related to the notion of realizing constrainahedra as Gromov-compactified configuration spaces, as described in \S\ref{ss:gromov}.

\subsection{Associated rectangular bracketings}
Let $a_{ij}$ be the intersection point of $L_i$ and $M_j$, and let $A(m,n)$ be the set of all $a_{ij}$.

\begin{defi}
A rectangular bracket is a subset of $A(m,n)$ of the form
\begin{align}
\label{eq:rect_bracket}
\bigl\{
a_{ij}
\:\big|\:
i_s \leq i \leq i_t,
\:
j_s \leq j \leq j_t
\bigr\}.
\end{align}
\null\hfill$\triangle$
\end{defi}

To a good rectangular preorder $P$, we associate a collection of rectangular brackets $Br(P)$, called a rectangular bracketing.

\begin{defi}
A bracket as in \eqref{eq:rect_bracket}
is added to $Br(P)$ if, according to $P$, all of the collisions $l_i$ for $i_s \leq i < i_t$ and $m_j$ for  $j_s \leq j < j_t$ happened earlier than $l_{i_s-1}$, $i_{i_t}$, $m_{j_s-1}$ and $m_{j_t}$ (if one of those is not defined, it is assumed to happen {\em never}, which is later than anything).
\null\hfill$\triangle$
\end{defi}

\noindent
Informally this means that a bracket embraces items that collide at each moment of time.

\begin{eg}
For the preorder in Example \ref{preord}, its associated rectangular bracketing is this:

\begin{center}
\begin{tikzpicture}
\node (1) {$l_1$};
\node (2) [below of = 1] {$m_2$};
\node (the) [below of = 2] {};
\node (3) [left of = the] {$m_1$};
\node (4) [right of = the] {$m_3$};
\node (5) [below of = the] {$l_2$};
\node (6) [right of = 4] {$m_4$};

\draw[thick] (1) -- (2);
\draw[thick] (2) -- (3);
\draw[thick] (2) -- (4);
\draw[thick] (4) -- (5);
\draw[thick] (3) -- (5);
\draw[thick, double] (4) -- (6);

\begin{scope}[shift = {(4,-0.5)}]
\node (a11) {$a_{11}$};
\node (a12) [right of=a11] {$a_{12}$};
\node (a13) [right of=a12] {$a_{13}$};
\node (a14) [right of=a13] {$a_{14}$};
\node (a15) [right of=a14] {$a_{15}$};

\node (a21) [below of =a11] {$a_{21}$};
\node (a22) [right of=a21] {$a_{22}$};
\node (a23) [right of=a22] {$a_{23}$};
\node (a24) [right of=a23] {$a_{24}$};
\node (a25) [right of=a24] {$a_{25}$};

\node (a31) [below of =a21] {$a_{31}$};
\node (a32) [right of=a31] {$a_{32}$};
\node (a33) [right of=a32] {$a_{33}$};
\node (a34) [right of=a33] {$a_{34}$};
\node (a35) [right of=a34] {$a_{35}$};

\draw[rounded corners] ($(a21)+(-0.3,0.2)$) rectangle ($(a31)+(0.3,-0.2)$);
\draw[rounded corners] ($(a22)+(-0.3,0.2)$) rectangle ($(a32)+(0.3,-0.2)$);
\draw[rounded corners] ($(a23)+(-0.3,0.2)$) rectangle ($(a33)+(0.3,-0.2)$);
\draw[rounded corners] ($(a24)+(-0.3,0.2)$) rectangle ($(a34)+(0.3,-0.2)$);
\draw[rounded corners] ($(a25)+(-0.3,0.2)$) rectangle ($(a35)+(0.3,-0.2)$);

\draw[rounded corners] ($(a11)+(-0.4,0.3)$) rectangle ($(a12)+(0.4,-0.3)$);
\draw[rounded corners] ($(a21)+(-0.4,0.3)$) rectangle ($(a32)+(0.4,-0.3)$);

\draw[rounded corners] ($(a13)+(-0.4,0.3)$) rectangle ($(a15)+(0.4,-0.3)$);
\draw[rounded corners] ($(a23)+(-0.4,0.3)$) rectangle ($(a35)+(0.4,-0.3)$);

\draw[rounded corners] ($(a11)+(-0.5,0.4)$) rectangle ($(a15)+(0.5,-0.4)$);

\draw[rounded corners] ($(a21)+(-0.5,0.4)$) rectangle ($(a35)+(0.5,-0.4)$);

\draw[rounded corners] ($(a11)+(-0.6,0.5)$) rectangle ($(a35)+(0.6,-0.5)$);

\end{scope}

\end{tikzpicture}
\end{center}

\noindent
For instance, the bracket embracing $a_{21}$, $a_{22}$, $a_{31}$ and $a_{32}$ was added because the collisions $l_2$ and $m_1$ were all earlier than $l_1$ and $m_2$ (and certainly earlier than {\em never}).
\null\hfill$\triangle$
\end{eg}

The assignment $P \mapsto Br(P)$ gives a map from good rectangular preoders to sets of subsets of $A(m,n)$:
\begin{align}
Br: C(m,n) \to 2^{2^{A(m,n)}}
\end{align}

\begin{theo}
$Br$ is injective: the associated rectangular bracketing keeps all the data of a good rectangular preorder.
\end{theo}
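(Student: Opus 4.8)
The plan is to exhibit a procedure that recovers $P$ from $Br(P)$. I would first reduce to recovering only the strict part $<_P$ of the preorder. A good rectangular preorder is in fact determined by $<_P$: for a pair $x,y$ with no strict relation either way, whether $x\equiv_P y$ or $x\#y$ is forced by the two axioms --- \textsc{(Orthogonal comparability)} makes any such orthogonal pair $\equiv_P$, while for a parallel pair \textsc{(Parallel comparability)} decides comparability in terms of the presence of an orthogonal link and of a gap, and both of these can be phrased using $<_P$ alone (``$m_i\le_P l_s$'' means ``$\lnot(l_s<_P m_i)$'' by \textsc{(Orthogonal comparability)}, and ``gap'' is defined by strict inequalities only). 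So it suffices to read $<_P$ off from $Br(P)$.

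To this end, define $x\prec y$ to mean that some $\beta\in Br(P)$ has $x$ as an inside collision and $y$ as a boundary collision. That $\prec\,\subseteq\,<_P$ is immediate. The key is a partial converse, obtained as follows. Given a collision $y$, the set $S=\{c:c<_P y\}$ is itself a set of collisions; since $m$-collisions merge whole columns and $l$-collisions merge whole rows, $S$ partitions $A(m,n)$ into a grid of rectangular blocks (columns cut at the $i$ with $m_i\notin S$, rows at the $j$ with $l_j\notin S$). Suppose $x<_P y$ and, moreover, that in this grid $y$ borders the block $\beta_0$ having $x$ inside it; this holds automatically whenever $x,y$ are orthogonal, or are consecutive parallel collisions, and more generally whenever every collision of $y$'s type lying index-wise between $x$ and $y$ is $<_P y$. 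Then $\beta_0\in Br(P)$: its inside collisions lie in $S$, hence are $<_P y$, and any other boundary collision $c'$ satisfies $c'\notin S$ by maximality, so $c'\not<_P y$; moreover $c'$ cannot be incomparable to $y$, for then (orthogonal pairs being comparable) $c'$ would be parallel to $y$ and \textsc{(Parallel comparability)} would supply a gap collision strictly between $c'$ and $y$, which would be an inside collision of $\beta_0$ and hence $<_P y$ --- impossible. Thus $c'>_P y$ or $c'\equiv_P y$, so the inside collisions, all $<_P y\le_P c'$, are $<_P c'$; hence $\beta_0\in Br(P)$ and $x\prec y$. Consequently $\prec$ recovers $<_P$ exactly on orthogonal pairs --- and thereby all orthogonal relations, an orthogonal pair with no $\prec$-relation being $\equiv_P$ --- and on consecutive parallel pairs.

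It remains to recover $<_P$ between non-consecutive parallel collisions, say $m_i<_P m_j$ with $i<j$, which I would do by induction on the index-distance $j-i$, simultaneously determining the full relation (whether $<_P$, $\equiv_P$, or incomparable) between any two parallel collisions and using the relations recovered so far (and symmetrically for the $l$'s). If every intervening $m_k$ is $<_P m_j$ then $\beta_0$ above has $m_j$ on its boundary and $m_i\prec m_j$. Otherwise some intervening $m_k$ is not $<_P m_j$; one checks no intervening collision can be $\#_P m_j$ (that would force, via \textsc{(Parallel comparability)}, an intervening gap collision $>_P m_j$), so either some intervening $m_{a'}$ is $>_P m_j$ or else all intervening collisions are $\le_P m_j$ with at least one $\equiv_P m_j$. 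In the first case $m_{a'}$ is a gap between $m_i$ and $m_j$, so \textsc{(Parallel comparability)} (they are comparable) yields an orthogonal link $l_s$ with $m_i\le_P l_s\le_P m_j$; since $l_s$ is orthogonal to both $m_i$ and $m_j$ the relations between them are already known, and as $m_i\not\equiv_P m_j$ these are not both equivalences, so transitive closure (through the recovered equivalences) recovers $m_i<_P m_j$. In the second case, taking $a'$ the largest intervening index with $m_{a'}\equiv_P m_j$, the collisions past $a'$ are all $<_P m_j$; the equivalence $m_{a'}\equiv_P m_j$ is recovered (they are comparable by the criterion of the first paragraph, with no strict relation recovered between them, which is correct), the smaller instance $m_i<_P m_{a'}$ is recovered by induction, and $m_i\le_P m_{a'}\equiv_P m_j$ gives $m_i<_P m_j$.

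The main obstacle is this last step: making the induction on parallel collisions airtight, and in particular controlling the interaction of the $\equiv_P$-relations with the rectangular-bracket structure (the ``comparability'' computations invoked along the way depend on the strict relations found so far, so the induction must be organized carefully). That the goodness axioms are genuinely needed is already visible from the fact that an orthogonal $\equiv_P$-pair can be separated by a single-row or single-column bracket --- so the two collisions of such a pair need not lie inside the same members of $Br(P)$ --- and it is precisely the orthogonal links furnished by \textsc{(Parallel comparability)} that allow one to reroute and still recover every strict relation.
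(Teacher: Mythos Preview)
Your proof is correct and follows the same overall strategy as the paper---exhibit an explicit recovery of $P$ from $Br(P)$---but the organization is genuinely different and more thorough. The paper proceeds by direct case analysis: for an orthogonal pair $(l_i,m_j)$ it inspects how brackets separate the four neighbouring intersection points $a_{i,j}, a_{i,j+1}, a_{i+1,j}, a_{i+1,j+1}$; for a parallel pair it first handles the ``link'' case by transitivity through the already-recovered orthogonal relations, and for the ``no gap'' case it looks for nested brackets $R_1\subset R_2$ distinguished by which of $a_{i,1},a_{i+1,1},a_{j,1},a_{j+1,1}$ they contain. Your approach instead (i) reduces to recovering only the strict part $<_P$, (ii) introduces the relation $\prec$ (``$x$ is an inside collision and $y$ a boundary collision of some bracket''), proving it agrees with $<_P$ on orthogonal pairs and on parallel pairs whose intervening same-type collisions are all $<_P y$, and (iii) finishes by induction on index-distance, routing through either an orthogonal link or a nearer $\equiv_P$-partner. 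What the paper's approach buys is brevity; what yours buys is a clean invariant---the block $\beta_0$ you construct is precisely the bracket witnessing $\prec$---and an explicit inductive scheme that makes the interplay between $\equiv_P$ and $\#$ transparent. Your construction of $\beta_0$ in fact supplies the bracket $R_1$ the paper needs (with $R_2$ the full rectangle), so the two arguments are compatible.

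One presentational point in your third paragraph: the phrase ``one checks no intervening collision can be $\#_P m_j$'' is misleading as stated---such incomparabilities \emph{can} occur (take $m=5$, $n=2$ with the total order $m_1<l_1<m_4<m_2<m_3$: then $m_1<_P m_4$ while $m_2\#_P m_4$). What you actually need, and what your parenthetical correctly supplies, is only that an intervening $\#_P m_j$ forces some intervening collision $>_P m_j$, so your ``first case'' absorbs it; after that the case split and the induction go through.
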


\begin{proof}
We explain how the data the data of $P$ can be restored from $Br(P)$. First consider two orthogonal collisions, $l_i$ and $m_j$. Look at the the intersections $a_{i,j}$, $a_{i,j+1}$, $a_{i+1,j}$ and $a_{i+1,j+1}$. There are three possibilities: 
\begin{enumerate}
    \item
    $Br(P)$ has brackets $R_1$, $R_2$ such that $a_{i,j}$ and $a_{i,j+1}$ are in $R_1$ and not in $R_2$, while $a_{i+1,j}$ and $a_{i+1,j+1}$ are in $R_2$ and not in $R_1$. In this case $l_i <_P m_i$.
    
    \item 
    $Br(P)$ has brackets $R_1$, $R_2$ such that $a_{i,j}$ and $a_{i+1,j}$ are in $R_1$ and not in $R_2$, while $a_{i,j+1}$ and $a_{i+1,j+1}$ are in $R_2$ and not in $R_1$.
    In this case $l_i >_P m_i$.
    
    \item
    Every bracket of $Br(P)$ that has $a_{i,j}$ also has $a_{i+1,j+1}$.
    In this case, $l_i \equiv m_j$.
\end{enumerate}

Now consider two parallel collisions, $l_i$ and $l_j$, for $i < j$.
(For $m_i$ and $m_j$, the argument is identical.)
If they were comparable through existence of a link, this data is restored by transitivity.
For comparability through the absence of gaps, check which of the following four possibilities holds:

\begin{enumerate}
    \item
    $Br(P)$ has two brackets $R_1 \subset R_2$ such that $R_1$ contains $a_{i,1}$ and $a_{i+1,1}$ but not $a_{j+1,1}$, and $R_2$ contains $a_{j+1,1}$. In this case $l_i < l_j$.
    
    \item
    $Br(P)$ has two brackets $R_1 \subset R_2$ such that $R_1$ contains $a_{j,1}$ and $a_{j+1,1}$ but not $a_{i,1}$, and $R_2$ contains $a_{i,1}$. In this case $l_i > l_j$.
    
    \item
    $Br(P)$ has a bracket that contains $a_{i,1}$ and $a_{j+1,1}$, and for every $R' \subset R$ the same holds. In this case $l_i \equiv l_j$.
    
    \item
    None of the above; in this case either they $l_i$ and $l_j$ are comparable via an orthogonal link, or incomparable. 
\end{enumerate}
\end{proof}

\noindent
This shows that rectangular bracketings can be used as a convenient visualization of preorders.

\begin{rem}
When looking at a collection of rectangular brackets, it might be not immediately obvious whether this collection is associated to a preorder.
For example, the reader might check that the following collection is not.

\begin{center}
\begin{tikzpicture}

\node (a11) {$a_{11}$};
\node (a12) [right of=a11] {$a_{12}$};
\node (a13) [right of=a12] {$a_{13}$};
\node (a14) [right of=a13] {$a_{14}$};

\node (a21) [below of =a11] {$a_{21}$};
\node (a22) [right of=a21] {$a_{22}$};
\node (a23) [right of=a22] {$a_{23}$};
\node (a24) [right of=a23] {$a_{24}$};

\node (a31) [below of =a21] {$a_{31}$};
\node (a32) [right of=a31] {$a_{32}$};
\node (a33) [right of=a32] {$a_{33}$};
\node (a34) [right of=a33] {$a_{34}$};

\node (a41) [below of =a31] {$a_{41}$};
\node (a42) [right of=a41] {$a_{42}$};
\node (a43) [right of=a42] {$a_{43}$};
\node (a44) [right of=a43] {$a_{44}$};

\draw[rounded corners] ($(a11)+(-0.3,0.2)$) rectangle ($(a21)+(0.3,-0.2)$);
\draw[rounded corners] ($(a12)+(-0.3,0.2)$) rectangle ($(a22)+(0.3,-0.2)$);

\draw[rounded corners] ($(a11)+(-0.4,0.3)$) rectangle ($(a22)+(0.4,-0.3)$);
\draw[rounded corners] ($(a13)+(-0.4,0.3)$) rectangle ($(a24)+(0.4,-0.3)$);
\draw[rounded corners] ($(a31)+(-0.4,0.3)$) rectangle ($(a42)+(0.4,-0.3)$);
\draw[rounded corners] ($(a33)+(-0.4,0.3)$) rectangle ($(a44)+(0.4,-0.3)$);

\draw[rounded corners] ($(a11)+(-0.5,0.4)$) rectangle ($(a44)+(0.5,-0.4)$);

\end{tikzpicture}
\end{center}
\null\hfill$\triangle$
\end{rem}

\begin{eg}
Below is $C(2,3)$ in terms of rectangular bracketings, overlaid with a hexagon.

\begin{center}
\begin{tikzpicture}

\begin{scope}[scale=0.8, every node/.style={transform shape},local bounding box=one]
\node (a11) {$a_{11}$};
\node (a12) [right of=a11] {$a_{12}$};
\node (a13) [right of=a12] {$a_{13}$};

\node (a21) [below of =a11] {$a_{21}$};
\node (a22) [right of=a21] {$a_{22}$};
\node (a23) [right of=a22] {$a_{23}$};

\draw[rounded corners] ($(a11)+(-0.3,0.2)$) rectangle ($(a12)+(0.3,-0.2)$);

\draw[rounded corners] ($(a21)+(-0.3,0.2)$) rectangle ($(a22)+(0.3,-0.2)$);

\draw[rounded corners] ($(a11)+(-0.4,0.3)$) rectangle ($(a13)+(0.4,-0.3)$);

\draw[rounded corners] ($(a21)+(-0.4,0.3)$) rectangle ($(a23)+(0.4,-0.3)$);

\draw[rounded corners] ($(a11)+(-0.5,0.4)$) rectangle ($(a23)+(0.5,-0.4)$);
\end{scope}

\begin{scope}[color = blue, shift = {(-3,-2)}, scale=0.8, every node/.style={transform shape}]
\node (a11) {$a_{11}$};
\node (a12) [right of=a11] {$a_{12}$};
\node (a13) [right of=a12] {$a_{13}$};

\node (a21) [below of =a11] {$a_{21}$};
\node (a22) [right of=a21] {$a_{22}$};
\node (a23) [right of=a22] {$a_{23}$};

\draw[rounded corners] ($(a11)+(-0.4,0.3)$) rectangle ($(a12)+(0.4,-0.3)$);

\draw[rounded corners] ($(a21)+(-0.4,0.3)$) rectangle ($(a22)+(0.4,-0.3)$);

\draw[rounded corners] ($(a11)+(-0.5,0.4)$) rectangle ($(a23)+(0.5,-0.4)$);
\end{scope}

\begin{scope}[color = red, shift = {(3,-4)}, every node/.style={transform shape}]
\node (a11) {$a_{11}$};
\node (a12) [right of=a11] {$a_{12}$};
\node (a13) [right of=a12] {$a_{13}$};

\node (a21) [below of =a11] {$a_{21}$};
\node (a22) [right of=a21] {$a_{22}$};
\node (a23) [right of=a22] {$a_{23}$};

\draw[rounded corners] ($(a11)+(-0.5,0.4)$) rectangle ($(a23)+(0.5,-0.4)$);
\end{scope}

\begin{scope}[color = blue, shift = {(3,1)}, scale=0.8, every node/.style={transform shape}]
\node (a11) {$a_{11}$};
\node (a12) [right of=a11] {$a_{12}$};
\node (a13) [right of=a12] {$a_{13}$};

\node (a21) [below of =a11] {$a_{21}$};
\node (a22) [right of=a21] {$a_{22}$};
\node (a23) [right of=a22] {$a_{23}$};

\draw[rounded corners] ($(a11)+(-0.4,0.3)$) rectangle ($(a13)+(0.4,-0.3)$);

\draw[rounded corners] ($(a21)+(-0.4,0.3)$) rectangle ($(a23)+(0.4,-0.3)$);

\draw[rounded corners] ($(a11)+(-0.5,0.4)$) rectangle ($(a23)+(0.5,-0.4)$);
\end{scope}

\begin{scope}[shift = {(6,0)}, scale=0.8, every node/.style={transform shape},local bounding box = two]
\node (a11) {$a_{11}$};
\node (a12) [right of=a11] {$a_{12}$};
\node (a13) [right of=a12] {$a_{13}$};

\node (a21) [below of =a11] {$a_{21}$};
\node (a22) [right of=a21] {$a_{22}$};
\node (a23) [right of=a22] {$a_{23}$};

\draw[rounded corners] ($(a12)+(-0.3,0.2)$) rectangle ($(a13)+(0.3,-0.2)$);

\draw[rounded corners] ($(a22)+(-0.3,0.2)$) rectangle ($(a23)+(0.3,-0.2)$);

\draw[rounded corners] ($(a11)+(-0.4,0.3)$) rectangle ($(a13)+(0.4,-0.3)$);

\draw[rounded corners] ($(a21)+(-0.4,0.3)$) rectangle ($(a23)+(0.4,-0.3)$);

\draw[rounded corners] ($(a11)+(-0.5,0.4)$) rectangle ($(a23)+(0.5,-0.4)$);
\end{scope}

\begin{scope}[shift = {(-2,-4)},scale=0.8, every node/.style={transform shape},local bounding box = six]
\node (a11) {$a_{11}$};
\node (a12) [right of=a11] {$a_{12}$};
\node (a13) [right of=a12] {$a_{13}$};

\node (a21) [below of =a11] {$a_{21}$};
\node (a22) [right of=a21] {$a_{22}$};
\node (a23) [right of=a22] {$a_{23}$};

\draw[rounded corners] ($(a11)+(-0.3,0.2)$) rectangle ($(a12)+(0.3,-0.2)$);

\draw[rounded corners] ($(a21)+(-0.3,0.2)$) rectangle ($(a22)+(0.3,-0.2)$);

\draw[rounded corners] ($(a11)+(-0.4,0.3)$) rectangle ($(a22)+(0.4,-0.3)$);

\draw[rounded corners] ($(a13)+(-0.4,0.3)$) rectangle ($(a23)+(0.4,-0.3)$);

\draw[rounded corners] ($(a11)+(-0.5,0.4)$) rectangle ($(a23)+(0.5,-0.4)$);
\end{scope}

\begin{scope}[shift = {(0,-8)},scale=0.8, every node/.style={transform shape},local bounding box = five]
\node (a11) {$a_{11}$};
\node (a12) [right of=a11] {$a_{12}$};
\node (a13) [right of=a12] {$a_{13}$};

\node (a21) [below of =a11] {$a_{21}$};
\node (a22) [right of=a21] {$a_{22}$};
\node (a23) [right of=a22] {$a_{23}$};

\draw[rounded corners] ($(a11)+(-0.3,0.2)$) rectangle ($(a21)+(0.3,-0.2)$);

\draw[rounded corners] ($(a12)+(-0.3,0.2)$) rectangle ($(a22)+(0.3,-0.2)$);

\draw[rounded corners] ($(a11)+(-0.4,0.3)$) rectangle ($(a22)+(0.4,-0.3)$);

\draw[rounded corners] ($(a13)+(-0.4,0.3)$) rectangle ($(a23)+(0.4,-0.3)$);

\draw[rounded corners] ($(a11)+(-0.5,0.4)$) rectangle ($(a23)+(0.5,-0.4)$);
\end{scope}

\begin{scope}[shift = {(8,-4)}, scale=0.8, every node/.style={transform shape},local bounding box = three]
\node (a11) {$a_{11}$};
\node (a12) [right of=a11] {$a_{12}$};
\node (a13) [right of=a12] {$a_{13}$};

\node (a21) [below of =a11] {$a_{21}$};
\node (a22) [right of=a21] {$a_{22}$};
\node (a23) [right of=a22] {$a_{23}$};

\draw[rounded corners] ($(a12)+(-0.3,0.2)$) rectangle ($(a13)+(0.3,-0.2)$);

\draw[rounded corners] ($(a22)+(-0.3,0.2)$) rectangle ($(a23)+(0.3,-0.2)$);

\draw[rounded corners] ($(a12)+(-0.4,0.3)$) rectangle ($(a23)+(0.4,-0.3)$);

\draw[rounded corners] ($(a11)+(-0.4,0.3)$) rectangle ($(a21)+(0.4,-0.3)$);

\draw[rounded corners] ($(a11)+(-0.5,0.4)$) rectangle ($(a23)+(0.5,-0.4)$);
\end{scope}

\begin{scope}[color = blue, shift = {(9,-2)}, scale=0.8, every node/.style={transform shape},local bounding box]
\node (a11) {$a_{11}$};
\node (a12) [right of=a11] {$a_{12}$};
\node (a13) [right of=a12] {$a_{13}$};

\node (a21) [below of =a11] {$a_{21}$};
\node (a22) [right of=a21] {$a_{22}$};
\node (a23) [right of=a22] {$a_{23}$};

\draw[rounded corners] ($(a12)+(-0.4,0.3)$) rectangle ($(a13)+(0.4,-0.3)$);

\draw[rounded corners] ($(a22)+(-0.4,0.3)$) rectangle ($(a23)+(0.4,-0.3)$);

\draw[rounded corners] ($(a11)+(-0.5,0.4)$) rectangle ($(a23)+(0.5,-0.4)$);
\end{scope}

\begin{scope}[color = blue, shift = {(9,-6)}, scale=0.8, every node/.style={transform shape}]
\node (a11) {$a_{11}$};
\node (a12) [right of=a11] {$a_{12}$};
\node (a13) [right of=a12] {$a_{13}$};

\node (a21) [below of =a11] {$a_{21}$};
\node (a22) [right of=a21] {$a_{22}$};
\node (a23) [right of=a22] {$a_{23}$};

\draw[rounded corners] ($(a12)+(-0.4,0.3)$) rectangle ($(a23)+(0.4,-0.3)$);

\draw[rounded corners] ($(a11)+(-0.4,0.3)$) rectangle ($(a21)+(0.4,-0.3)$);

\draw[rounded corners] ($(a11)+(-0.5,0.4)$) rectangle ($(a23)+(0.5,-0.4)$);
\end{scope}

\begin{scope}[shift = {(6,-8)}, scale=0.8, every node/.style={transform shape},local bounding box = four]
\node (a11) {$a_{11}$};
\node (a12) [right of=a11] {$a_{12}$};
\node (a13) [right of=a12] {$a_{13}$};

\node (a21) [below of =a11] {$a_{21}$};
\node (a22) [right of=a21] {$a_{22}$};
\node (a23) [right of=a22] {$a_{23}$};

\draw[rounded corners] ($(a12)+(-0.3,0.2)$) rectangle ($(a22)+(0.3,-0.2)$);

\draw[rounded corners] ($(a13)+(-0.3,0.2)$) rectangle ($(a23)+(0.3,-0.2)$);

\draw[rounded corners] ($(a12)+(-0.4,0.3)$) rectangle ($(a23)+(0.4,-0.3)$);

\draw[rounded corners] ($(a11)+(-0.4,0.3)$) rectangle ($(a21)+(0.4,-0.3)$);

\draw[rounded corners] ($(a11)+(-0.5,0.4)$) rectangle ($(a23)+(0.5,-0.4)$);
\end{scope}

\begin{scope}[color = blue, shift = {(3,-9)}, scale=0.8, every node/.style={transform shape}]
\node (a11) {$a_{11}$};
\node (a12) [right of=a11] {$a_{12}$};
\node (a13) [right of=a12] {$a_{13}$};

\node (a21) [below of =a11] {$a_{21}$};
\node (a22) [right of=a21] {$a_{22}$};
\node (a23) [right of=a22] {$a_{23}$};

\draw[rounded corners] ($(a12)+(-0.4,0.3)$) rectangle ($(a22)+(0.4,-0.3)$);

\draw[rounded corners] ($(a13)+(-0.4,0.3)$) rectangle ($(a23)+(0.4,-0.3)$);

\draw[rounded corners] ($(a11)+(-0.4,0.3)$) rectangle ($(a21)+(0.4,-0.3)$);

\draw[rounded corners] ($(a11)+(-0.5,0.4)$) rectangle ($(a23)+(0.5,-0.4)$);
\end{scope}

\begin{scope}[color = blue, shift = {(-3,-6)}, scale=0.8, every node/.style={transform shape}]
\node (a11) {$a_{11}$};
\node (a12) [right of=a11] {$a_{12}$};
\node (a13) [right of=a12] {$a_{13}$};

\node (a21) [below of =a11] {$a_{21}$};
\node (a22) [right of=a21] {$a_{22}$};
\node (a23) [right of=a22] {$a_{23}$};

\draw[rounded corners] ($(a13)+(-0.4,0.3)$) rectangle ($(a23)+(0.4,-0.3)$);

\draw[rounded corners] ($(a11)+(-0.4,0.3)$) rectangle ($(a22)+(0.4,-0.3)$);

\draw[rounded corners] ($(a11)+(-0.5,0.4)$) rectangle ($(a23)+(0.5,-0.4)$);
\end{scope}

\draw[very thick] (one) -- (two) -- (three) -- (four) -- (five) -- (six) -- (one);
\end{tikzpicture}
\end{center}
\null\hfill$\triangle$
\end{eg}

\section{Constrainahedra are lattices}
\label{sec:lattice}

In this section, we will show that $C(m,n)$, with a formal minimal element adjoined, is a lattice.
Using this property, we will go on to establish a stronger property in \S\ref{sec:polytopes}: $C(m,n)$ is the face lattice of an embedded polytope.

We begin with a technical lemma about parallel comparisons.

\begin{lem}
\label{lem:tech}
For a parallel comparison $m_i \leq m_j$ in some good rectangular preorder $P$, we have $m_s \leq m_j$ for every $s \in [i,j]$.
\end{lem}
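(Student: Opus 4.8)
The plan is to first show that the hypothesis forbids a \emph{gap} between $m_i$ and $m_j$, and then to deduce the conclusion by a short two-step argument. Assume without loss of generality that $i<j$ (the case $i=j$ is trivial, and the statement is symmetric in the two collisions), and note that the cases $s=i$ and $s=j$ of the conclusion are immediate; so fix $s$ with $i<s<j$.

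The reduction I would make first: since $m_i$ and $m_j$ are parallel and comparable, {\sc (Parallel comparability)} tells us that either there is no gap between them, or there is an orthogonal link $l_t$ with $m_i\le l_t\le m_j$. In the first case we proceed directly to the main argument below. In the second case I would invoke {\sc (Orthogonal comparability)}: $m_s$ and $l_t$ are comparable, and if $m_s\le l_t$ then $m_s\le l_t\le m_j$ and we are done at once, so one is left with the configuration $l_t<m_s$. Ruling out that this configuration coexists with $m_s\not\le m_j$ — using the gap between $m_i$ and $m_j$ that must then be present — is where I expect the real work to be, and it is the main obstacle in the proof.

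Granting ``no gap between $m_i$ and $m_j$'', i.e.\ that no $r\in[i,j]$ has both $m_i<m_r$ and $m_j<m_r$, the rest is quick. \emph{Step 1: $m_s$ is comparable to $m_j$.} If not, then by {\sc (Parallel comparability)} there is a gap $m_r$ between $m_s$ and $m_j$; its index $r$ lies between $s$ and $j$, hence in $[i,j]$, and $m_r>m_s$, $m_r>m_j$. From $m_r>m_j\ge m_i$ we get $m_r>m_i$ (if $m_r\equiv m_i$ then $m_j\ge m_i\equiv m_r>m_j$, which is absurd in a preorder), so $m_r$ is a gap between $m_i$ and $m_j$ --- contradiction. \emph{Step 2.} By Step 1 exactly one of $m_s<m_j$, $m_s\equiv m_j$, $m_s>m_j$ holds. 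The first two give $m_s\le m_j$. In the third, $m_s>m_j\ge m_i$ forces $m_s>m_i$ by the same reasoning, so $m_s$ itself is a gap between $m_i$ and $m_j$ (its index $s$ lies in $[i,j]$) --- again a contradiction. Hence $m_s\le m_j$, as desired. Note that Steps 1 and 2 use only {\sc (Parallel comparability)}; all the subtlety is concentrated in the opening reduction and its interaction with orthogonal links.
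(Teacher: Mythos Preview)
Your Steps~1 and~2 under the hypothesis ``no gap between $m_i$ and $m_j$'' are correct, and this is essentially the paper's argument: manufacture a gap $m_k$ between $m_i$ and $m_j$ and derive a contradiction. The paper's write-up is terser --- it only argues that $m_s$ cannot be \emph{incomparable} to $m_j$, leaving the reader to observe (as you do in Step~2) that $m_s>m_j$ would itself make $m_s$ a gap.

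Your instinct about the orthogonal-link case is exactly right, and the obstacle is not merely technical: the lemma as stated is \emph{false} in that case. On $\Coll(4,2)=\{m_1,m_2,m_3,l_1\}$ the total order $m_1<l_1<m_3<m_2$ is a good rectangular preorder (the only nontrivial parallel pair is $(m_1,m_3)$, which has gap $m_2$ but link $l_1$), and here $m_1\le m_3$ with $2\in[1,3]$ yet $m_2>m_3$. One can even get incomparability: on $\Coll(5,2)$ take $m_1<l_1$, $l_1<m_2<m_3$, $l_1<m_4<m_3$, $m_2\#m_4$; then $m_1\le m_4$ via the link $l_1$, $2\in[1,4]$, but $m_2\#m_4$. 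The paper's proof glosses over this: its final clause ``contradicts their comparability'' tacitly assumes that a gap between $m_i$ and $m_j$ forces incomparability, whereas {\sc (Parallel comparability)} allows comparability through a link even when a gap is present.

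The resolution is that only the ``no gap'' version is ever needed. In the sole application (the proof of Lemma~\ref{join}) each parallel step $x_a\le_P x_{a+1}$ is explicitly one ``which happens due to absence of gaps''. Under that additional hypothesis your Steps~1 and~2 constitute a complete proof; so you have proved exactly the statement the paper actually uses, and you were right not to try to push through the link case.
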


\begin{proof}
We need to show that $m_s$ cannot be incomparable to $m_j$. Assume the contrary; then there exists a gap $m_k$ with $k \in [s,j]$ and $m_k > m_j$.
But $[s,j] \subset [i,j]$, so $m_k$ is also a gap between $m_i$ and $m_j$, which contradicts their comparability.
\end{proof}

Using Lemma~\ref{lem:tech}, we now define an operation that will turn out to be the join in $C(m,n) \cup \{-1\}$.

\begin{lem}
\label{join}
Let $P$ and $Q$ be two good rectangular preorders, viewed as subsets in $\Coll(m,n) \times \Coll(m,n)$.
Let  $\overline{P \cup Q}$ denote the transitive closure of $P \cup Q$.
Then $\overline{P \cup Q}$  is also a good rectangular preorder.
\end{lem}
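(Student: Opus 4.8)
The plan is to verify, in order, that $\PQ$ is (i) a preorder, (ii) rectangular, and (iii) good, the last being the substantial part. Reflexivity and transitivity are free: $P \cup Q$ is reflexive since both $P$ and $Q$ are, and $\PQ$ is transitive by construction as a transitive closure. So the real content is that taking the transitive closure does not destroy the two axioms of a good rectangular preorder. I would first record a useful normal form for elements of $\PQ$: any relation $x \leq_{\PQ} y$ is witnessed by a finite chain $x = z_0 \leq z_1 \leq \cdots \leq z_k = y$ where each step $z_{t} \leq z_{t+1}$ lies in $P$ or in $Q$. I would then argue one may take this chain to \emph{alternate} between $P$-steps and $Q$-steps (consecutive steps in the same preorder collapse by transitivity of that preorder), which bounds the combinatorial complexity of the witnessing chains and will be the main tool in what follows.

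For rectangularity: this is a statement about which pairs $(x,y)$ of orthogonal versus parallel collisions are allowed to be comparable at all, and the constraints are monotone under adding relations in a controlled way — I would check that any new comparison created by transitivity is still of an admissible type, which is essentially automatic since the "rectangular" condition only forbids certain comparabilities that the two axioms (Orthogonal/Parallel comparability) then regulate. (In the paper's setup a "rectangular preorder" appears to be simply a preorder on $\Coll(m,n)$; if so this step is vacuous and I would say so explicitly.)

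For goodness, (1) Orthogonal comparability: if $m_i$ and $l_j$ are already comparable in $P$ or in $Q$ they remain comparable in $\PQ$; the only worry is a pair that was incomparable in both but which goodness requires to be comparable — but orthogonal pairs are \emph{always} required to be comparable, and both $P$ and $Q$ already satisfied this, so there is nothing to check. The heart of the proof is (2) Parallel comparability, in the direction: if $m_i$ and $m_j$ become comparable in $\PQ$ (say $m_i \leq_{\PQ} m_j$, $i < j$) then either there is an orthogonal link between them in $\PQ$, or there is no gap between them in $\PQ$. Here I would take an alternating witnessing chain from $m_i$ to $m_j$. If any intermediate $z_t$ is some $l_s$, then since orthogonal comparisons are transitively implied along the chain one gets $m_i \leq_{\PQ} l_s \leq_{\PQ} m_j$, i.e.\ an orthogonal link, and we are done. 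Otherwise all $z_t$ are parallel collisions $m_{s_t}$, so the chain $m_i = m_{s_0} \leq m_{s_1} \leq \cdots \leq m_{s_k} = m_j$ consists entirely of parallel comparisons, each living in $P$ or $Q$. I would then use Lemma~\ref{lem:tech} inside each of $P$ and $Q$ to "fill in the gaps": for each $P$- or $Q$-step $m_a \leq m_b$ we get $m_c \leq m_b$ for all $c$ between $a$ and $b$. Combined with transitivity along the chain and an induction on $j - i$ (or on the chain length), this should show that every $m_k$ with $k \in [i,j]$ satisfies $m_k \leq_{\PQ} m_j$, hence there is no gap $m_k > m_j$ between $m_i$ and $m_j$ — which is exactly what parallel comparability demands. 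One also needs the converse direction of the "if and only if": that $\PQ$ does not make \emph{too few} comparisons, i.e.\ if in $\PQ$ there is an orthogonal link or no gap between two parallel collisions then they are comparable — but "$\PQ$ has an orthogonal link" literally means $m_i \leq_{\PQ} l_s \leq_{\PQ} m_j$, which already gives $m_i \leq_{\PQ} m_j$ by transitivity; and the "no gap" case is handled symmetrically (if neither $m_i \leq_{\PQ} m_k$ fails for some $k$ ... ), so this direction reduces to the same chain/Lemma~\ref{lem:tech} analysis applied with the roles bookkept carefully.

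The main obstacle I anticipate is the bookkeeping in the parallel case: making the induction on $[i,j]$ genuinely go through when the witnessing chain zig-zags (a step might temporarily increase, e.g.\ $m_i \leq_P m_\ell$ with $\ell > j$, before coming back down via a $Q$-step), so that one cannot naively stay inside $[i,j]$. The remedy is to apply Lemma~\ref{lem:tech} to each individual monotone step before chaining, and to phrase the induction as: "for every $k \in [i,j]$, $m_k \leq_{\PQ} m_j$," proving it by extracting, from the alternating chain, a sub-derivation that reaches $m_j$ from $m_k$. A careful statement of this extraction — essentially that the set $\{k : m_k \leq_{\PQ} m_j\}$ is an interval containing $j$ whenever it is nonempty, which follows from Lemma~\ref{lem:tech} applied step-by-step — is what I would write out in full; everything else is routine.
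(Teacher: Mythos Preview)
Your overall architecture matches the paper's: reflexivity/transitivity are automatic, {\sc (Orthogonal comparability)} is inherited, and the substance is {\sc (Parallel comparability)}. Your treatment of the direction ``$\PQ$-comparable $\Rightarrow$ (link or no gap)'' is essentially the paper's argument. The paper phrases it by contradiction --- assume there is a $\PQ$-gap $m_s$ and no link, take a witnessing chain, find the step that passes over the index $s$, and apply Lemma~\ref{lem:tech} once to get $m_s \leq_{\PQ} m_j$, contradicting the gap --- whereas you phrase it positively (show $m_k \leq_{\PQ} m_j$ for every $k\in[i,j]$). These are the same idea; note that the zig-zag ``obstacle'' you flag evaporates once you see that a single application of Lemma~\ref{lem:tech} at the crossing step suffices, with no induction needed.

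The genuine gap is the other direction, ``no $\PQ$-gap $\Rightarrow$ $\PQ$-comparable''. You write that this ``is handled symmetrically'' and ``reduces to the same chain/Lemma~\ref{lem:tech} analysis'', but it does not: there is no chain to analyze yet, since comparability is precisely what you are trying to establish. The paper's argument here uses a different idea. If $m_i$ and $m_j$ have no $P$-gap (or no $Q$-gap), they are already comparable in $P$ (resp.\ $Q$) by goodness of that preorder, hence in $\PQ$. Otherwise pick a $P$-gap $m_s$: we have $m_i <_P m_s >_P m_j$, hence $m_i \leq_{\PQ} m_s \geq_{\PQ} m_j$. Since $m_s$ is assumed \emph{not} to be a $\PQ$-gap, one of these two inequalities must fail to be strict in $\PQ$, i.e.\ $m_i \equiv_{\PQ} m_s$ or $m_s \equiv_{\PQ} m_j$. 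Either equivalence, combined with the other (weak) inequality, gives $\PQ$-comparability of $m_i$ and $m_j$. This step --- exploiting that a $P$-gap which disappears in $\PQ$ forces a useful equivalence --- is the missing ingredient in your plan.
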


\begin{proof}
{\sc (Orthogonal comparability)} is satisfied trivially: orthogonal collisions are comparable in each of the preorders, so certainly in their union and in its transitive closure.

{\sc (Parallel comparability)} requires more work.
First assume that for two parallel collisions there exists a $\PQ$-link.
Then they are comparable by transitivity of $\PQ$.
Now assume that for two parallel collisions (without loss of generality call them $m_i$ and $m_j$) there are no $\PQ$-gaps between them.

If this is due to absence of gaps in $P$ or in $Q$, then comparability follows from the axioms on $P$ or $Q$.
So assume that both $P$ and $Q$ have a gap between $m_i$ and $m_j$.
Let $m_s$ be such a gap in $P$: $m_i <_P m_s >_P m_j$. The fact that this gap disappears in $\PQ$ means that one of strict inequalities becomes an equivalence: either $m_i \geq_{\PQ} m_s$ or $m_j \geq_{\PQ} m_s$.
In the first case, we have $m_i \geq_{\PQ} m_s >_P m_j$, and in the second case we have $m_i <_P m_s \leq_{\PQ} m_j$ --- in both cases this gives a $\PQ$-comparability, because $\PQ$ is transitive and refines $P$.

In the other direction: assume that for two parallel collisions $m_i$ and $m_j$ that there are no $\PQ$-links and a $\PQ$-gap $m_s$.
We must show that they cannot be $\PQ$-comparable.

Assume the contrary --- that they are $\PQ$-comparable, which means that there exists a chain $m_i = x_0 \leq x_1 \leq \ldots \leq x_p = m_j$, where each of inequalities $x_a \leq x_{a+1}$ holds either in $P$ or in $Q$.
Firstly we notice that none of $x_k$ can be orthogonal to $m_i$ and $m_j$: this would give an orthogonal link.
This means that each of $x_a \leq x_{a+1}$ is a parallel comparison of $m$'s which happens due to absence of gaps. We now recall our $m_s$, a $\PQ$-gap.
Consider the step $x_a \leq_P x_{a+1}$ over $m_s$.
Then we have $x_{a+1} \geq_P m_s$ by Lemma \ref{lem:tech}, and $x_{a+1} \geq_P m_s >_{\PQ} m_j$ contradicts $x_{a+1} \leq_{\PQ} m_j$.
\end{proof}

Next, we define an operation that will turn out to be the meet in $C(m,n) \sqcup \{-1\}$.

\begin{lem}
\label{meet}
Let $P$ and $Q$ be two good rectangular preorders.
Assume that $P \cap Q$ satisfies {(\sc orthogonal comparability)}, and absence of gaps implies parallel comparability.
Then $P \cap Q$ is a good rectangular preorder.
\end{lem}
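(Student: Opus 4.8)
The plan is to verify the defining properties of a good rectangular preorder for $P \cap Q$ one at a time. First, $P \cap Q$ is automatically a preorder: it is reflexive because both $P$ and $Q$ contain the diagonal of $\Coll(m,n) \times \Coll(m,n)$, and it is transitive because if $(x,y)$ and $(y,z)$ both lie in $P \cap Q$ then they lie in $P$, whence $(x,z) \in P$, and likewise $(x,z) \in Q$. Thus $P \cap Q$ is a rectangular preorder, and it satisfies (\textsc{orthogonal comparability}) by the first hypothesis. The work is therefore concentrated in checking (\textsc{parallel comparability}): for two parallel collisions, comparability in $P \cap Q$ must be equivalent to the existence of a $(P\cap Q)$-orthogonal link between them or the absence of a $(P\cap Q)$-gap between them.

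For the ``$\Leftarrow$'' half of this equivalence there is essentially nothing to do. If there is an orthogonal link between the two parallel collisions --- say $m_i \leq_{P\cap Q} l_s \leq_{P\cap Q} m_j$ --- then $m_i \leq_{P\cap Q} m_j$ by transitivity of $P \cap Q$, so they are comparable; and if there is no gap between them, they are comparable by the second hypothesis of the lemma. (The same reasoning applies with the roles of the $l$'s and $m$'s exchanged.)

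The ``$\Rightarrow$'' half is where the goodness of $P$ and $Q$ --- via Lemma~\ref{lem:tech} --- enters. I would prove the stronger statement that comparable parallel collisions in $P \cap Q$ never have a gap between them, which of course implies the desired direction. Let $m_i$ and $m_j$ be parallel and comparable in $P \cap Q$. The two collisions play symmetric roles, and the case $m_i \equiv_{P \cap Q} m_j$ is just the conjunction of $m_i \leq_{P\cap Q} m_j$ and $m_j \leq_{P\cap Q} m_i$, so we may assume $m_i \leq_{P\cap Q} m_j$, i.e.\ $m_i \leq_P m_j$ and $m_i \leq_Q m_j$. Applying Lemma~\ref{lem:tech} separately to the good preorders $P$ and $Q$ gives $m_s \leq_P m_j$ and $m_s \leq_Q m_j$, hence $m_s \leq_{P\cap Q} m_j$, for every $s \in [i,j]$. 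In particular $m_j \not<_{P\cap Q} m_s$ for each such $s$, so no $m_s$ is a gap between $m_i$ and $m_j$ in $P \cap Q$. The identical argument, using the $l$-analogue of Lemma~\ref{lem:tech}, handles two parallel $l$-collisions. This establishes (\textsc{parallel comparability}) for $P \cap Q$, and with it the lemma.

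The only real point of the proof is the observation in the previous paragraph: the ``only if'' half of (\textsc{parallel comparability}) for $P \cap Q$ is forced by Lemma~\ref{lem:tech} applied to $P$ and to $Q$ individually, and requires no extra hypothesis. Everything else is either formal (an intersection of preorders is a preorder; transitivity closes up orthogonal links) or is literally one of the two assumptions in the statement --- which is precisely why those two assumptions are the natural hypotheses here.
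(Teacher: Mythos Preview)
Your proof is correct. The approach differs slightly from the paper's: for the ``only if'' half of \textsc{(parallel comparability)}, the paper argues by contradiction --- assuming a $(P\cap Q)$-gap $m_s$ and no link, it uses the goodness axioms of $P$ and $Q$ (comparability plus no link forces no gap in each) to deduce $m_s \equiv_P m_j$ and $m_s \equiv_Q m_j$, contradicting strictness of the gap. You instead invoke Lemma~\ref{lem:tech} directly in $P$ and in $Q$ to get $m_s \leq_P m_j$ and $m_s \leq_Q m_j$ for every $s$ between $i$ and $j$, hence $m_s \leq_{P\cap Q} m_j$, which already kills the gap condition $m_s >_{P\cap Q} m_j$. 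Your route is a bit cleaner and in fact establishes the stronger fact that comparable parallel collisions in $P\cap Q$ have no gap at all (not merely ``a link or no gap''); the paper's argument, on the other hand, makes explicit why the ``no link'' hypothesis interacts with the goodness of $P$ and $Q$, which is conceptually closer to how the meet is later used in Theorem~\ref{lattice}.
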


\begin{proof}
We are left to check the converse: that parallel comparability implies that there is either an orthogonal link or no gaps.
Assume the contrary:
that there are two parallel collisions $m_i \leq_{P \cap Q} m_j$ such that there is a gap and no link between them.
We notice that absence of link in $P \cap Q$ means that there was no link both in $P$ and in $Q$: $P \cap Q$ can only satisfy {\sc (orthogonal comparability)} is all orthogonal comparisons coincide in $P$ and in $Q$.
So $m_i \leq_P m_j$ means that there is no gap in $P$ and $m_i \leq_Q m_j$ means that there is no gap in $Q$.
Now consider $m_s$ which is a $P \cap Q$-gap between $m_i$ and $m_j$.
Since $m_s \geq m_j$ holds in $P \cap Q$, it must also hold both in $P$ and in $Q$, so the only chance for $m_s$ not to be a $P$-gap is $m_s \equiv_P m_j$, and the only chance for $m_s$ not to be a $Q$-gap is $m_s \equiv_Q m_j$.
But then $m_s \equiv_{P \cap Q} m_j$, which contradicts its being a gap.
\end{proof}

We are now ready to prove the main result of this section.

\begin{theo}
\label{lattice}
The poset $C(m,n) \sqcup \{-1 \}$ is a lattice.
\end{theo}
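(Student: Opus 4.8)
The plan is to establish that $C(m,n)\sqcup\{-1\}$ is closed under binary joins and binary meets, which is exactly what it means to be a lattice; Lemmas~\ref{join} and~\ref{meet} supply the two constructions. Throughout I use the order convention of \S\ref{sec:main_def}: viewing a preorder as the subset of $\Coll(m,n)\times\Coll(m,n)$ of pairs $(x,y)$ with $x\le y$, we have $R_1\le R_2$ if and only if $R_1\subseteq R_2$. Note that $-1$ is genuinely needed: once $m,n\ge 2$, two good preorders that order an orthogonal pair $m_i,l_j$ oppositely have no common lower bound inside $C(m,n)$, since any lower bound would leave $m_i$ and $l_j$ incomparable, violating {\sc (orthogonal comparability)}.

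I would first handle joins. Clearly $\mathrm{join}(-1,R)=R$ for all $R$. For $P,Q\in C(m,n)$ I claim $\mathrm{join}(P,Q)=\PQ$: it is the smallest preorder containing $P\cup Q$, it is a good rectangular preorder by Lemma~\ref{join}, it contains both $P$ and $Q$, and any good preorder $R$ with $P\subseteq R$ and $Q\subseteq R$ contains $P\cup Q$ and hence, being transitive, contains $\PQ$. In particular, joining over all of $C(m,n)$ produces a top element $\hat 1$, the indiscrete preorder in which all collisions are equivalent; this is good because {\sc (orthogonal comparability)} is immediate and, for {\sc (parallel comparability)}, every parallel pair is comparable while there are no strict inequalities and hence no gaps at all. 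So $C(m,n)$ is nonempty for all $m,n\ge 1$ and has a maximum.

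Next I would handle meets. Here $\mathrm{meet}(-1,R)=-1$. For $P,Q\in C(m,n)$, if $P\cap Q$ satisfies {\sc (orthogonal comparability)} and the implication ``no gap between a parallel pair $\Rightarrow$ that pair is comparable'', then $P\cap Q$ is a good rectangular preorder by Lemma~\ref{meet}, and it is visibly the greatest lower bound of $P$ and $Q$, since any good lower bound $R$ has $R\subseteq P$ and $R\subseteq Q$, hence $R\subseteq P\cap Q$ (and $-1\le P\cap Q$ trivially). Otherwise I set $\mathrm{meet}(P,Q)=-1$ and must check that no good preorder is contained in $P\cap Q$. If {\sc (orthogonal comparability)} fails in $P\cap Q$, the offending orthogonal pair stays incomparable in every subpreorder, so none exists. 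If instead a parallel pair $m_i,m_j$ is incomparable in $P\cap Q$ with no $(P\cap Q)$-gap, then a good $R\subseteq P\cap Q$ would have $m_i,m_j$ incomparable and hence, by {\sc (parallel comparability)}, would possess an $R$-gap $m_s$; but then $m_i\le_R m_s$ gives $m_i\le_{P\cap Q}m_s$, while $m_s\le_{P\cap Q}m_i$ is impossible (it would combine with $m_j\le_{P\cap Q}m_s$ to make $m_i,m_j$ comparable in $P\cap Q$), so $m_i<_{P\cap Q}m_s$, and symmetrically $m_j<_{P\cap Q}m_s$ --- exhibiting a $(P\cap Q)$-gap, a contradiction. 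Hence $-1$ is the greatest lower bound.

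With all binary joins and meets in hand, $C(m,n)\sqcup\{-1\}$ is a lattice. (One can shorten the write-up: a finite poset with a least element in which all binary joins exist is automatically a lattice, so the meet discussion could be replaced by that general fact.) The substantive content lies entirely in Lemmas~\ref{join} and~\ref{meet} --- the verification that transitive closure of a union, and intersection corrected to $-1$ when it fails goodness, preserve goodness --- which are already established; I expect this to be the only real obstacle. The rest is bookkeeping: keeping the inclusion-versus-refinement direction straight, and checking that the argument does not go vacuous in the degenerate cases $m=1$ or $n=1$, where $\Coll(m,n)$ has no orthogonal collisions at all.
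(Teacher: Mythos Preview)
Your proof is correct and follows essentially the same route as the paper: Lemma~\ref{join} gives $\PQ$ as the join, and for meets you take $P\cap Q$ when it is good and otherwise argue (via Lemma~\ref{meet}'s dichotomy) that no good preorder can sit below $P\cap Q$, exactly as the paper does. Your handling of the meet case~(b) is in fact a bit crisper than the paper's, and your parenthetical observation that the meet argument is redundant (finite poset with $\hat 0$ and all binary joins is already a lattice) is a valid shortcut the paper does not take.
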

\begin{proof}
Let $P$ and $Q$ be two good rectangular preorders.
The existence of their join in $C(m,n)$ follows from Lemma \ref{join}: $\PQ$ is the join of $P$ and $Q$ among all the rectangular posets, so certainly among good ones.

For their meet, check if $P \cap Q$ (the meet of $P$ and $Q$ among all preorders) is good.
If it is, then this is the meet.
Otherwise the meet is $-1$. For this to be true, we need to verify that if $P \cap Q$ is not good, then there is no good preorder refined by it.

Assume the contrary. By Lemma \ref{meet}, failure to be good means either lack of orthogonal comparability or lack of parallel comparability for a pair with no gap.
Lack of orthogonal comparability cannot be rectified by further coarsening. So let $m_i \#_{P \cap Q} m_j$ be an incomparable parallel pair with no $P\cap Q$-gap.
Rectification by coarsening would mean creating this gap, by replacing one or two equivalences by strict inequalities, $m_i \equiv m_s \equiv m_j$ $\implies$ $m_i < m_s > m_j$ or $m_i \equiv m_s > m_j$ $\implies$ $m_i < m_s > m_j$ --- but in both cases we see that $m_j$ and $m_j$ are already comparable in $P \cap Q$.
\end{proof}

The reward for our labour is that we are now in a position to use the following lemma, which will be a key ingredient in our construction of polytopal realizations of the constrainahedra.

\begin{lem}[Exercise 3.27, \cite{stanley_vol_1}]
Let $L$ be a finite lattice, and define the subposet $\Irr(L)$ of irreducibles of $L$ by
\begin{align}
\Irr(L)
\coloneqq
\bigl\{
x \in L
\:\big|\:
x
\text{ is join-irreducible or meet-irreducible (or both)}
\bigr\}
\end{align}
Then $L$ can be uniquely recovered from the poset $\Irr(L)$.
\null\hfill$\square$
\end{lem}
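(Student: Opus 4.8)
The plan is to reconstruct $L$ as the lattice of ``closed'' down-sets of the poset $\Irr(L)$, using the two standard representation facts for a finite lattice: every $x\in L$ is the join of the join-irreducibles $\le x$, and the meet of the meet-irreducibles $\ge x$. These give an order-preserving map
\[
\phi\colon L\longrightarrow \cD\bigl(\Irr(L)\bigr),\qquad \phi(x)=\{\,y\in\Irr(L):y\le x\,\},
\]
where $\cD(\Irr(L))$ is the poset of down-sets of $\Irr(L)$ ordered by inclusion. The map $\phi$ is also order-\emph{reflecting}: if $\phi(x)\subseteq\phi(x')$, then intersecting with the join-irreducibles gives $x=\bigvee\{\,j\ \text{join-irr.}:j\le x\,\}\le\bigvee\{\,j:j\le x'\,\}=x'$. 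So $\phi$ is a poset embedding, and the whole task is to describe its image intrinsically, in terms of the abstract poset $\Irr(L)$ alone.

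The heart of the argument is to produce, from the poset $\Irr(L)$ only, a closure operator whose closed sets are exactly $\phi(L)$. For $S\subseteq\Irr(L)$ set
\[
\mathrm{cl}(S)=\bigl\{\,y\in\Irr(L):y\le z\ \text{for every }z\in\Irr(L)\text{ that is an upper bound of }S\,\bigr\}.
\]
This refers only to the order relation of $\Irr(L)$. I claim $\mathrm{cl}(S)=\phi\bigl(\bigvee_L S\bigr)$. The inclusion ``$\supseteq$'' is immediate: any upper bound $z\in\Irr(L)$ of $S$ satisfies $z\ge\bigvee_L S$ in $L$, so $y\le\bigvee_L S$ forces $y\le z$. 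For ``$\subseteq$'', note that $\bigvee_L S$ equals the meet of all meet-irreducibles lying above every element of $S$; hence a $y\in\mathrm{cl}(S)$, being below each such meet-irreducible, is below their meet $\bigvee_L S$, i.e.\ $y\in\phi(\bigvee_L S)$. This is exactly where it is essential that $\Irr(L)$ contains the meet-irreducibles and not merely the join-irreducibles: if one lets $z$ range only over join-irreducibles the resulting operator is strictly coarser and gives the wrong lattice (one can see this already on small examples, e.g.\ a four-element chain with a coatom adjoined).

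It then follows formally that $\mathrm{cl}$ is a closure operator on $2^{\Irr(L)}$, that every $\mathrm{cl}$-closed set is a down-set (indeed $\mathrm{cl}(S)$ is a down-set for any $S$), and that $\phi$ restricts to a bijection from $L$ onto the $\mathrm{cl}$-closed subsets of $\Irr(L)$: the identity $\bigvee_L\phi(x)=x$ shows $\phi(x)$ is closed, while a closed set $S$ satisfies $S=\mathrm{cl}(S)=\phi(\bigvee_L S)$. Since $\phi$ is an order-preserving, order-reflecting bijection onto the poset of $\mathrm{cl}$-closed subsets of $\Irr(L)$ ordered by inclusion, it is a lattice isomorphism. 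As that target poset is manufactured from $\Irr(L)$ by a recipe that never mentions $L$, this exhibits $L$, up to isomorphism, as a function of the poset $\Irr(L)$ alone, which is the assertion.

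The one genuinely delicate point, and the step I would write out most carefully, is the identity $\mathrm{cl}(S)=\phi(\bigvee_L S)$, together with the degenerate cases: $S=\varnothing$ (where $\bigvee_L S=\hat0$ and the defining intersection runs over all of $\Irr(L)$) and $S=\Irr(L)$ (where $\bigvee_L S=\hat1$), so that the bottom and top of $L$ come out correctly. Everything else is routine manipulation with the two representation identities and the axioms of a closure operator.
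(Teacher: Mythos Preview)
Your argument is correct. The paper itself does not prove this lemma: it is quoted as Exercise~3.27 of Stanley's book and marked with a~$\square$, i.e.\ accepted as a black box. So there is no ``paper's own proof'' to compare against; what you have supplied is a complete solution to the cited exercise.

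The key step --- showing that the closure operator $\mathrm{cl}$ defined purely from the order on $\Irr(L)$ satisfies $\mathrm{cl}(S)=\phi\bigl(\bigvee_L S\bigr)$ --- is handled correctly, and your remark that this is precisely where one needs the meet-irreducibles (not just the join-irreducibles) is the crux of the exercise. The order-reflecting property of $\phi$ and the identification of the closed sets with $\phi(L)$ are then routine, as you say. The degenerate cases $S=\varnothing$ and $S=\Irr(L)$ indeed need a word, and your treatment of them is fine once one recalls that $\bigwedge\varnothing=\hat1$ and that every element of $L$ is both the join of the join-irreducibles below it and the meet of the meet-irreducibles above it.
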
 

In the case of $C(m,n)$, the irreducibles are easy to describe. 

\begin{lem}
In $C(m,n)$, the vertices (join-irreducibles) are those preorders in which equivalence implies equality.
In particular, the number of equivalence classes is $n+m-2$, and preorders are actually posets. 
\end{lem}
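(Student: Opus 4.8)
The statement is an ``iff'': the join-irreducible elements of the lattice $C(m,n)\sqcup\{-1\}$ are precisely the good rectangular preorders whose equivalence relation is trivial, i.e.\ the honest partial orders. Granting this, the ``in particular'' clause is immediate, since such a preorder has $|\Coll(m,n)|=(m-1)+(n-1)=m+n-2$ singleton equivalence classes and is a partial order by hypothesis. So the plan is to prove the two inclusions separately.

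\emph{Step 1: every good preorder $P$ that is a partial order is join-irreducible.} I would prove the stronger statement that $P$ has no proper good refinement at all, so that $P$ is an atom of $C(m,n)\sqcup\{-1\}$ and hence join-irreducible. So suppose $Q\subsetneq P$ is a good preorder. Being a refinement of a partial order, $Q$ is again a partial order, and being good it makes every orthogonal pair comparable; since $Q\subseteq P$ these comparisons agree with those of $P$, so $Q$ and $P$ induce the same relation on every orthogonal pair. The covering relations of $P$ generate $P$ under transitive closure, so $Q\neq P$ forces $Q$ to omit a covering relation of $P$; it cannot be orthogonal, hence it is a parallel covering relation, say $m_a\lessdot_P m_b$ with $a<b$ (the $l$-case is symmetric). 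Then $m_a$ and $m_b$ are $Q$-incomparable, so by parallel comparability $Q$ has a gap $m_s$ with $a<s<b$ and $m_a<_Q m_s>_Q m_b$; since $Q\subseteq P$ and $P$ is a partial order, $m_s$ is also a $P$-gap between $m_a$ and $m_b$. But $m_a<_P m_b$, so parallel comparability for $P$ yields an orthogonal $P$-link $l_t$ with $m_a\le_P l_t\le_P m_b$, and then, by the agreement of orthogonal relations, $m_a\le_Q l_t\le_Q m_b$, so $m_a\le_Q m_b$ --- contradicting incomparability. Hence no such $Q$ exists.

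\emph{Step 2: every good preorder $P$ that is not a partial order is join-reducible.} Fix a nontrivial equivalence class $E$ of $P$. For each $e\in E$ I would take $Q_e$ to be the largest good preorder refining $P$ in which $e$ is strictly below every other element of $E$. This is well defined: such refinements exist (split $E$ so that $e$ sits at the bottom, then repair goodness), and the collection of them is closed under the join of $C(m,n)$, because in a join of two such refinements any chain witnessing $e'\le e$ for some $e'\in E$ would have to stay inside $E$, where $e$ is strictly minimal, and so cannot exist. Each $Q_e$ is a proper refinement of $P$, since $e\equiv_P e'$ in $P$. Now $\bigvee_{e\in E}Q_e$ refines $P$ and, for all $e,e'\in E$, contains both $e<_{Q_e}e'$ and $e'<_{Q_{e'}}e$, so $E$ is again an equivalence class of this join; checking that the remaining relations of $P$ are recovered as well shows $\bigvee_{e\in E}Q_e=P$, exhibiting $P$ as a join of strictly smaller elements.

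\emph{The main obstacle.} Step 1 is short and self-contained; the real work is in Step 2, in two places: producing by hand at least one good refinement of $P$ in which a chosen $e\in E$ is strictly minimal in $E$ (and checking it is good), and verifying that $\bigvee_{e\in E}Q_e$ is exactly $P$ rather than some strictly coarser good preorder. I expect to carry both out with the help of Lemma~\ref{lem:tech} --- which pins down exactly which parallel collisions can become gaps once a new strict inequality is introduced --- together with a short case split according to whether $E$ contains two parallel collisions (so $E$ may be split as a chain compatible with the order of indices) or $E$ is a two-element class of orthogonal collisions (so one only resolves that single comparison, dropping the parallel comparisons it forces to fail).
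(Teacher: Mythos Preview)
Your Step~1 is exactly the paper's argument: a good rectangular preorder with trivial equivalence admits no proper good refinement, since dropping a comparison can only be a parallel one, cannot destroy an orthogonal link (those are orthogonal comparisons, which stay), and cannot manufacture a gap without a pre-existing nontrivial equivalence. The paper's write-up is terser but identical in substance.

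The paper, however, stops there. Its proof only establishes ``poset $\Rightarrow$ join-irreducible'' and never argues the converse; so your Step~2 is not an alternative route to something the paper proves, but an attempt to supply a direction the paper simply omits.

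On Step~2 itself: your closure-under-join argument is correct (any $x$ on a witnessing chain satisfies $e'\le_P x\le_P e$ with $e\equiv_P e'$, so $x\in E$ by convexity of equivalence classes, and then strict minimality of $e$ in each $Q_i$ kills the last step). But both obstacles you flag are genuine and heavier than your final paragraph suggests. For existence of $Q_e$, ``split $E$ and repair goodness'' hides precisely the construction the paper later carries out for edge preorders, and that argument is already delicate when $|E|=2$; your case split also misses the situation where $E$ has more than two elements and mixes horizontal and vertical collisions. For $\bigvee_{e\in E}Q_e=P$, recovering the relations internal to $E$ is fine, but if $P$ has another nontrivial equivalence class $E'$, nothing you have said prevents every $Q_e$ from resolving $E'$ the same way, in which case the join lands strictly below $P$; you would need to show that maximality of $Q_e$ forces it to preserve all equivalences of $P$ outside $E$, and that does not follow from Lemma~\ref{lem:tech} alone. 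A cleaner target is to show directly that a $P$ with a nontrivial equivalence covers at least two good preorders --- this is closer to what the paper's subsequent edge lemmas actually establish.
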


\begin{proof}
Let $P$ be such a preorder.
We show that it does not refine any other preorder different from itself.
Assume the contrary: $P' \subsetneq P$.
Then there exists a comparison $a < b$ present in $P$ but not in $P'$. It can only be a parallel comparison, say $m_i < m_j$, with either no gap or a link.
But removing comparisons cannot cancel a link (orthogonal comparisons cannot be removed), and removing comparisons can only create a gap if there was a nontrivial equivalence --- which is not so in the case of $P$ being a poset. Thus indeed $P = \overline{Q \cup Q'}$ implies that either $Q$ or $Q'$ has to coincide with $P$.
\end{proof}

\begin{lem}
In $C(m,n)$, the facets (meet-irreducibles) are those preorders where the number of equivalence classes is $2$.
\end{lem}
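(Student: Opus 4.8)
The plan is to use the standard fact that an element of a finite lattice is meet-irreducible exactly when it is not the top element and has a unique upper cover, and then to identify these covers combinatorially. First I would record that the top element $\hat 1$ of $C(m,n)$ is the all-equivalent preorder (the full relation on $\Coll(m,n)$): it is good --- orthogonal comparability is automatic, and every parallel pair is equivalent, hence comparable, with any orthogonal collision serving as a link when one exists and no gap possible when none does (there are no strict inequalities) --- and as a subset of $\Coll(m,n) \times \Coll(m,n)$ it contains every other preorder, so it is the join of all of $C(m,n)$. Since it has exactly one equivalence class, it remains to prove: a preorder $P$ with at least two classes has a unique upper cover if and only if it has exactly two classes.

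For the ``if'' direction, given $P$ with two classes $A,B$, I would first argue they are comparable: otherwise every collision of $A$ is parallel to every collision of $B$ by {\sc (orthogonal comparability)}, which forces $A\cup B$ to consist of collisions of a single type, say with $n=1$; but then two consecutive collisions $m_i,m_{i+1}$ have no gap and no orthogonal link, hence are comparable, which forces all collisions into one class --- a contradiction. So, say, $A<B$. The only relations one can add to $P$ are pairs $(b,a)$ with $b\in B$, $a\in A$, and the transitive closure of any such addition identifies $A$ with $B$, producing $\hat 1$. Hence $\hat 1$ is the unique element strictly above $P$, so in particular its unique upper cover, and $P$ is meet-irreducible.

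For the ``only if'' direction I would show that a preorder $P$ with $k\ge 3$ classes has at least two upper covers. In the quotient poset $\bar P$, a similar argument shows there is no isolated element, so every class lies on an edge of the Hasse diagram of $\bar P$ and that diagram has at least $\lceil k/2\rceil\ge 2$ edges. For each covering relation $C\lessdot D$ of $\bar P$, form $P_{CD}$ by merging the classes $C$ and $D$, i.e.\ take the transitive closure of $P\cup\{(d,c)\mid c\in C,\ d\in D\}$. Distinct edges give $P_{CD}$'s with distinct underlying partitions, so the whole ``only if'' direction reduces to the claim: \emph{for every covering relation $C\lessdot D$ of $\bar P$, the preorder $P_{CD}$ is a good rectangular preorder that covers $P$.}

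That $P_{CD}>P$ is immediate. For ``cover'', I would note that any good $R$ with $P<R<P_{CD}$ has the same equivalence classes as $P$ (identifying $C$ with $D$ would already force $R\ge P_{CD}$, and no other identification keeps $R$ below $P_{CD}$), so $R$ only adds strict relations to $P$; such a relation is between parallel collisions, say $m_i<_R m_j$ with $m_i\#_P m_j$, and then {\sc (parallel comparability)} for $P$ gives a gap $m_t$ with $m_i<_P m_t>_P m_j$ and no orthogonal link between $m_i$ and $m_j$ --- both of which survive into $R$, contradicting its goodness. The genuinely hard part, and the step I expect to be the main obstacle, is showing $P_{CD}$ is good. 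Orthogonal comparability is inherited, and ``no gap implies comparable'' is easy: a $P$-gap that fails to remain a $P_{CD}$-gap must, since it still lies weakly above $m_i$ and $m_j$, become equivalent to one of them, which makes $m_i$ and $m_j$ comparable in $P_{CD}$. The remaining half --- that whenever two parallel collisions are comparable in $P_{CD}$ there is a link or no gap --- requires a careful case analysis, using Lemma~\ref{lem:tech} to track precisely which new comparabilities and which new gaps appear when one identifies a covering pair of classes. Granting this, $P$ has at least two upper covers and is not meet-irreducible; combined with the ``if'' direction, a meet-irreducible $P\ne\hat 1$ has at most two classes, hence (being $\ne\hat 1$) exactly two.
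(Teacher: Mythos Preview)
Your argument is considerably more ambitious than the paper's, which consists of two sentences and addresses only the forward implication: if $P$ has exactly two equivalence classes, then the sole good preorder strictly containing $P$ is the all-equivalent top, so any expression $P=Q\cap Q'$ forces $Q=P$ or $Q'=P$. The converse --- that no meet-irreducible can have three or more classes --- is simply not treated in the paper's proof.

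Your route through the ``unique upper cover'' characterisation is sound and covers both directions. The ``if'' half is essentially the paper's observation, with the extra (correct) check that the two classes must be comparable. For the ``only if'' half you correctly isolate the crux: for every covering pair $C\lessdot D$ in $\bar P$, the merged preorder $P_{CD}$ is a good rectangular preorder covering $P$. Your covering argument can in fact be completed without difficulty: once $R$ strictly between $P$ and $P_{CD}$ is forced to share the equivalence classes of $P$, any orthogonal comparison in $R$ already lies in $P$ (same classes, orthogonal pairs comparable in $P$), so a link in $R$ would already be a link in $P$; together with the survival of gaps that you note, this gives the contradiction.

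The step you flag --- goodness of $P_{CD}$ --- is a genuine gap in your write-up, and it does require work; it is not a one-liner. The case analysis you anticipate is essentially the same as what the paper carries out in the edge lemma for vertex preorders (constructing $E(e)$ from a Hasse edge), and that argument adapts to general $P$ with only cosmetic changes. So your plan is correct and completable, but as written it leaves unproved exactly the technical heart of the ``only if'' direction --- which, to be fair, the paper never proves either.
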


\begin{proof}
Let $P$ be such a preorder.
The only possible refinement joins the two equivalence classes in one.
Thus $P = Q \cap Q'$ trivially implies that either $Q$ or $Q'$ has to coincide with $P$.
\end{proof}

Additionally we will speak of {\em edges}.

\begin{defi}
An edge preorder is a preorder where the number of equivalence classes is $n+m-1$, meaning that there is exactly one class that consists of 2 simultaneous collisions, and every other class consists of 1 collision.
\null\hfill$\triangle$
\end{defi} 

Right now this is just an abstract definition.
Let us prove some edgy properties of edges. 

\begin{lem}
Let $P$ be a vertex preorder in $C(m,n)$.
The edge preorders containing $V$ are in bijection with the edges in the Hasse diagram of $P$.
\end{lem}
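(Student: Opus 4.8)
~\textbf{Proof plan.} The statement asserts a bijection between edge preorders $E$ with $V \leq E$ (i.e.\ $V$ refines $E$) and edges of the Hasse diagram of $V$. Since $V$ is a vertex preorder, by the earlier lemma it is an actual poset with $n+m-2$ singleton equivalence classes, one for each collision. An edge preorder $E$ has $n+m-1$ equivalence classes, so exactly one class of size two; since $V \leq E$, passing from $V$ to $E$ amounts to merging exactly two $V$-classes into one equivalence class while retaining all the order relations. The plan is to show that the pair of collisions merged must be a \emph{covering} pair in $V$ — that is, an edge of the Hasse diagram — and conversely that collapsing any covering pair in $V$ produces a good edge preorder refined by $V$.

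\textbf{Step 1: from an edge preorder to a Hasse edge.} Suppose $V \leq E$ and $E$ is an edge preorder, with $\{x,y\}$ the unique two-element equivalence class of $E$. First I would observe that $x$ and $y$ must be comparable in $V$ (say $x <_V y$): if they were $V$-incomparable, then by {\sc (orthogonal comparability)} they are parallel, and collapsing them to $x \equiv_E y$ while leaving everything else alone cannot introduce any new comparisons except those forced by transitivity through the merged class; but then $E$ would have $x$ and $y$ comparable with neither a link (links require a preexisting chain, which transitivity from $V$ would have already given in $V$) nor, one checks, the gap structure changing in the right way — more carefully, since $V$ has no nontrivial equivalences, the pair $x,y$ is $V$-incomparable precisely because of a $V$-gap $z$ with $x <_V z >_V y$; this $z$ is still a gap in $E$ unless $z \equiv_E x$ or $z \equiv_E y$, impossible as $\{x,y\}$ is the only nonsingleton class. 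So $x,y$ stay incomparable in $E$, contradicting $x \equiv_E y$. Hence $x <_V y$. Next I would show it is a covering relation: if $x <_V z <_V y$ for some $z$, then in $E$ we have $x \equiv_E y$, forcing $x \equiv_E z \equiv_E y$ by transitivity and antisymmetry-of-the-quotient, so the class $\{x,y\}$ actually contains $z$ — contradicting its size being exactly two. Therefore $\{x,y\}$ is an edge of the Hasse diagram of $V$.

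\textbf{Step 2: from a Hasse edge to an edge preorder.} Conversely, given a covering pair $x \lessdot_V y$, define $E$ by collapsing $x$ and $y$ into one equivalence class and taking the transitive closure (equivalently, the quotient order on $\Coll(m,n)$ where $x,y$ are identified). I would verify $E$ is a good rectangular preorder: {\sc (orthogonal comparability)} is immediate since all orthogonal comparisons of $V$ survive. For {\sc (parallel comparability)}, the only subtlety is whether collapsing $x,y$ destroys a gap and thereby creates a comparison not licensed by a link; but the merged class is the \emph{only} nontrivial class, so the argument from Lemma~\ref{join}/Lemma~\ref{meet} applies — a $V$-gap $m_k$ between some parallel pair $m_i, m_j$ can cease to be an $E$-gap only if $m_k$ joins the class of $m_i$ or of $m_j$, i.e.\ $m_k \in \{x,y\}$ and (say) $m_i \in \{x,y\}$, but then the pair $m_i,m_j$ was already getting a comparison in $V$ via the covering edge, or via Lemma~\ref{lem:tech} one gets the needed link/no-gap directly. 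And $E$ has exactly $n+m-1$ classes by construction, so it is an edge preorder. Clearly $V \leq E$. Finally the two assignments are mutually inverse: an edge preorder refined by $V$ is determined by its unique nonsingleton class, which Step 1 identifies with a Hasse edge, and Step 2 recovers $E$ from that edge.

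\textbf{Main obstacle.} The routine part is the bijection bookkeeping; the real work is Step 1's claim that the collapsed pair is forced to be comparable \emph{and} a covering pair in $V$ — in particular ruling out that collapsing an incomparable parallel pair could yield a good preorder. This is exactly where {\sc (parallel comparability)} and the fact that $V$ has no nontrivial equivalences (so every would-be gap is a genuine strict zigzag) must be combined; the cleanest route is to mimic the gap-tracking argument already used in the proof of Lemma~\ref{join}, noting that the presence of a single merged class is too weak to kill any gap that wasn't already ``almost killed'' in $V$.
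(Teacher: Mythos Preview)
Your overall approach matches the paper's: map an edge preorder to its unique two-element class (shown to be a Hasse edge of $V$), and conversely collapse a Hasse edge and take the transitive closure. Your Step~1 is fine --- in fact more detailed than the paper, which dismisses that direction as ``obvious''.

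The gaps are in Step~2. First, the assertion that $E$ has exactly one two-element class ``by construction'' is precisely the nontrivial point, and is where the paper spends its effort. Taking the transitive closure of $V \cup \{y \leq x\}$ could in principle force a further equivalence $c \equiv_E d$: if $d \leq_V y$ and $x \leq_V c$ and $c \leq_V d$, then $c,d$ land in the class of $x,y$. Ruling this out is \emph{exactly} where the covering hypothesis $x \lessdot_V y$ is used (one gets $x \leq_V c \leq_V d \leq_V y$, forcing $\{c,d\}=\{x,y\}$). You need to make this explicit; ``quotient order'' phrasing hides rather than proves it. Second, your verification of {\sc (Parallel comparability)} is muddled and only gestures at one direction. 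The biconditional requires checking both: (i) if there is no $E$-gap (or there is a link) then the pair is $E$-comparable --- here the key observation is that a $V$-gap $m_s$ can vanish only by becoming $E$-equivalent to one endpoint, which immediately yields the comparison; and (ii) if a parallel pair becomes newly $E$-comparable via a chain through the collapsed edge $m_i \leq_V y \equiv x \leq_V m_j$, one must exhibit a link or show there is no $E$-gap. The paper handles (ii) by a short case split: if either $x$ or $y$ is orthogonal it serves as a link; if both are parallel, goodness of $V$ gives no gap on each of the three segments, hence none overall. Your appeal to Lemmas~\ref{join} and~\ref{meet} does not substitute for this check.
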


\begin{proof}
It is obvious that any edge preorder refining $P$ replaces $a<b$ with $a \equiv b$ for some covering relation.
Now let $e$ be an edge in the Hasse diagram of $P$, corresponding to the covering relation $a<b$ for some $a, b \in \Coll$.
We define the edge preorder $E(e)$ to be the transitive closure of $P \cup \{ b \leq a \}$. What we need to check is that $E(e)$ satisfies the axioms of a good rectangular preorder, and is an edge.

{\sc (Orthogonal comparability)} is satisfied trivially: orthogonal collisions were already comparable in P.
Parallel collisions with an orthogonal link between them are comparable by transitivity.
Let $m_i$ and $m_j$ be parallel collisions with no gap between them.
Assume there was a gap $m_s$ in $P$ that disappeared in $E(e)$ --- this means that $m_s$ became equivalent say to $m_j$, thus yielding $m_i < m_s \equiv m_j$ which is a comparison by transitivity.
In the other direction, let $m_i$ and $m_j$ be $E(e)$-comparable collisions.
If they are also $P$-comparable then there is either a link or no gap in $P$ and thus also in $E(e)$.
So assume that $m_i \leq_P b \leq a \leq_P m_j$.
If any of $a$, $b$ is orthogonal to $m_i$ and $m_j$, then it gives a link; assume not.
But then there is no gap between $m_i$ and $b$, between $b$ and $a$, and between $a$ and $m_j$ --- thus no gap between $m_i$ and $m_j$.
This establishes {\sc (Parallel comparability)}.

Let us now check that $E(e)$ is an edge, i.e.\ that $\{a,b\}$ is the single equivalence class of size 2.
Assume the contrary. Let $c \equiv d$ be a new equivalence, with $c \leq_P d$ being a comparison in P, and and $d \leq c$ being added in the transitive closure of $P \cup \{ a>b \}$.
Then $c \geq d$ can be expressed as $d \leq_P b \leq a \leq_P  c$.
But this allows expressing $a \leq_P b$ as $a \leq_P c \leq_P d \leq_P b$ thus contradicting to $a<b$ being an edge in the Hasse diagram.
\end{proof}

\begin{lem}
In $C(m,n)$, every edge preorder $V$ contains exactly two vertex preorders. 
\end{lem}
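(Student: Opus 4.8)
The plan is to read off the vertices contained in an edge $V$ from the preceding lemma, reduce by symmetry to counting the vertices lying ``on one side'' of the unique nontrivial equivalence class of $V$, and then prove uniqueness by showing that two such vertices have a good intersection, which is impossible unless they agree.

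First I would fix an edge preorder $V$ and its unique size-$2$ equivalence class $\{u,w\}$ (all other classes being singletons). Since $u\equiv_V w$, the collisions $u$ and $w$ have identical order relations to every element of $\Coll(m,n)$, so transposing $u$ and $w$ defines an order-automorphism $\tau$ of $V$. By the preceding lemma, a vertex preorder $P$ is contained in $V$ exactly when $V=E(e)$ for some covering relation $e$ of $P$, where $E(e)=\overline{P\cup\{(b,a)\}}$ if $e$ is the relation $a<_P b$; and since $E(e)$ has $\{a,b\}$ as its only nontrivial equivalence class, necessarily $\{a,b\}=\{u,w\}$. Thus the vertices contained in $V$ split into those with $u<_P w$ and those with $w<_P u$, and $\tau$ interchanges these two families, so it suffices to show there is \emph{exactly one} vertex $P$ with $u\lessdot_P w$ (a covering relation) and $\overline{P\cup\{(w,u)\}}=V$. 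Existence is the easy half: $C(m,n)$ is finite and $V$, having a nontrivial equivalence class, is not a vertex, so $V$ lies strictly above some minimal element $P_0$ of $C(m,n)$; a minimal element is join-irreducible, hence a vertex by the lemma characterizing vertices, and the preceding lemma identifies the edge between $P_0$ and $V$ as $V=E(e_0)$ for a covering relation $e_0$ of $P_0$ with endpoints $\{u,w\}$ (apply $\tau$ if the orientation is wrong).

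For uniqueness, suppose $P$ and $P'$ are two such vertices; I claim $P\cap P'$ is a good rectangular preorder. Granting this, $P\cap P'\in C(m,n)$ and $P\cap P'\subseteq P$, so minimality of the vertex $P$ forces $P\cap P'=P$, hence $P\subseteq P'$, hence $P=P'$ by minimality of $P'$. To prove the claim I would invoke Lemma~\ref{meet}, so I must check that $P\cap P'$ satisfies {\sc (orthogonal comparability)} and that absence of a gap in $P\cap P'$ forces parallel comparability. The first holds because $P$ and $P'$ agree on every orthogonal comparison: for an orthogonal pair $\{c,d\}\neq\{u,w\}$ the elements $c,d$ are not $\equiv_V$, so their $V$-comparison is strict, and since $P\subseteq V$ with $P$ satisfying orthogonal comparability this same strict comparison already holds in $P$ (and likewise in $P'$); for $\{c,d\}=\{u,w\}$ both record $u<w$. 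Hence the orthogonal comparisons of $P\cap P'$ are exactly those of $P$, and they are all present.

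The remaining point — absence of a gap in $P\cap P'$ implies parallel comparability — is where the real work lies and is the main obstacle. I would argue by contradiction: take a parallel pair $c,d$ with no gap in $P\cap P'$ but incomparable there. The case where $c,d$ are comparable in both $P$ and $P'$ is ruled out (same orientation gives a comparison in $P\cap P'$; opposite orientations force $c\equiv_V d$, so $\{c,d\}=\{u,w\}$, contradicting that $P$ and $P'$ both put $u$ below $w$), so after possibly swapping $P$ and $P'$ we may assume $c\#_{P'}d$; then {\sc (parallel comparability)} for $P'$ yields a gap $m_s$ between $c,d$ in $P'$ and no $P'$-link. Since $P'\subseteq V$ this makes $m_s$ a gap between $c,d$ in $V$, so {\sc (parallel comparability)} for $V$ forces an orthogonal link $l_t$ between $c,d$ in $V$. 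Tracing $l_t$ back into $P'$: by {\sc (orthogonal comparability)} $l_t$ is comparable to each of $c,d$ in $P'$, but since $l_t$ is not a $P'$-link, at least one of $c\le_{P'}l_t$ or $l_t\le_{P'}d$ fails; a failed relation combined with $P'\subseteq V$ and the opposite relation holding in $V$ forces $l_t\equiv_V c$ or $l_t\equiv_V d$, hence $l_t\in\{u,w\}$; and then the fact that both $P$ and $P'$ orient $u$ below $w$ re-establishes the comparability of $c$ and $d$ — a contradiction. (Here one should also treat separately the sub-case in which $\{u,w\}$ is itself an orthogonal pair, handling it directly via {\sc (orthogonal comparability)}, and apply the technical Lemma~\ref{lem:tech} to relate gaps along chains.) With both conditions verified, Lemma~\ref{meet} applies, giving $P=P'$; together with the automorphism $\tau$ this shows $V$ contains exactly two vertex preorders.
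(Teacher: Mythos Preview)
Your approach differs from the paper's direct construction (which, starting from $V$ with $a\equiv b$ broken to $a<b$, identifies exactly the comparisons that must be deleted to restore {\sc (parallel comparability)}, verifies in two cases --- orthogonal versus parallel $\{a,b\}$ --- that the result is good, and notes that these deletions are forced, yielding existence and uniqueness at once). There are genuine gaps in both halves of your argument. For existence, you assume that $V$, being a non-vertex, is non-minimal and hence lies above some minimal $P_0$ which must then be a vertex; but the previous lemma only establishes vertex $\Rightarrow$ minimal, and the converse you invoke (minimal $\Rightarrow$ vertex, i.e.\ every good preorder with a nontrivial equivalence has a good preorder strictly below it) is exactly the existence statement you are trying to prove. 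This is a real circularity, not a technicality.

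For uniqueness, the strategy via Lemma~\ref{meet} is appealing and your check of orthogonal comparability for $P\cap P'$ is correct, but the second hypothesis is not verified. When you promote the $P'$-gap $m_s$ to a $V$-gap you need strict inequalities $c<_V m_s>_V d$; if $\{c,m_s\}$ or $\{d,m_s\}$ equals $\{u,w\}$ --- which can happen precisely when $\{u,w\}$ is a \emph{parallel} pair, not the orthogonal sub-case your parenthetical flags --- one of these becomes a $V$-equivalence and the step fails. You also tacitly assume $c$ and $d$ are $V$-comparable (needed to deduce a $V$-link from the presence of a $V$-gap via {\sc (parallel comparability)}), but $c\#_{P'}d$ together with $P'\subseteq V$ says nothing about this. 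Finally, the clause ``re-establishes the comparability of $c$ and $d$'' is too vague to evaluate: it is unclear in which preorder, and by what mechanism, comparability is recovered. Closing these gaps appears to require essentially the case analysis the paper performs directly.
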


\begin{proof}
Let $a \equiv b$ be the single two-element class of $E$. We need to show that there exists a good rectangular preorder $E'$ refined by $E$ with  $a < b$, and that any two such preorders coincide.
Both goals are achieved simultaneously by checking that minimal necessary modifications already result in a good rectangular preorder.

We consider two cases.
First assume that $a$ and $b$ are orthogonal, say $a = m_i$ and $b = l_j$.
Then any $E'$ should {\em not} have comparisons $ m_{j'} < l_i < m_j $ where the $l_i$ is the single link between $m_{j'}$ and $m_j$, and there exists a gap $m_s$ for some $s \in [j',j]$.
And, similarly, $E'$ should {\em not} have comparisons $ l_i < m_j < l_{i'}$
where the $m_j$ is the single link between $l_{i}$ and $l_{i'}$, and there exists a gap $l_t$ for some $t \in [i,i']$.
We find out that removing these illegitimate comparisons already results in a good rectangular preorder. Indeed, removing this comparisons does not affect transitivity or {\sc (Orthogonal comparability)}; for {\sc (Parallel comparability)}, we observe that no new gaps are created this way.

Now assume $a = m_i$ and $b = m_j$, with $m_s < m_i$, $m_s< m_j$ for $s \in [i,j]$.
Then, after replacing $m_i \equiv m_j$ with $m_i < m_j$, we can get that $m_j$ becomes a gap between $m_i$ and $m_{j'}$ with $j \in [i,j']$, so whenever there is no orthogonal link, $E'$ should not have these comparisons $m_i > m_{j'}$.
We again find out that removing them results in a good rectangular preorder.
Transitivity and {\sc (Orthogonal comparability)} are again trivially not affected, and the removals allow for {\sc (Parallel comparability)}.
\end{proof}

Slightly informally, the two lemmas above mean that passing from a vertex to another vertex along an edge is achieved by swapping the respective edge in the Hasse diagram of the first vertex preorder, and adjusting the result accordingly (throwing away edges for illegitimate comparisons, drawing edges for legitimate comparisons that were not previously covering). \\

For the next section, we need some understanding of edge-connectedness.

\begin{lem}
\label{connect}
Any two vertices of $C(m,n)$ are connected by a sequence of edges. 
\end{lem}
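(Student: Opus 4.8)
The plan is to connect every vertex of $C(m,n)$ to a single distinguished vertex $V_\ast$ by a sequence of edge-moves; any two vertices are then joined through $V_\ast$. Take
\[
V_\ast \;=\; \bigl(\, m_1 < m_2 < \dots < m_{m-1} < l_1 < l_2 < \dots < l_{n-1} \,\bigr).
\]
This is a good rectangular preorder, hence --- having no nontrivial equivalences --- a vertex: {\sc (Orthogonal comparability)} is automatic since $V_\ast$ is a total order, and for $i<s<j$ one has $m_s <_{V_\ast} m_j$, so $m_s$ is never a gap between $m_i$ and $m_j$, and likewise among the $l$'s, so {\sc (Parallel comparability)} holds too. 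Each edge-move is the operation isolated in the preceding lemmas: from a vertex preorder $V$, choose a covering relation $a \lessdot b$ in its Hasse diagram, pass to the edge preorder $\overline{V \cup \{b \le a\}}$, and resolve $a \equiv b$ the other way to reach the second vertex $V'$ of that edge, discarding the comparisons that became illegitimate.

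For a vertex $V$, let $\operatorname{inv}(V)$ count the $V$-comparable pairs $\{x,y\}$ on which the $V$-order disagrees with the $V_\ast$-order; this is well defined since $V_\ast$ is total. If $\operatorname{inv}(V)=0$ then $V\subseteq V_\ast$ as relations, and since vertex preorders are exactly the $\subseteq$-minimal good rectangular preorders, $V = V_\ast$. I would then show that $V\ne V_\ast$ has a covering relation on which it disagrees with $V_\ast$: starting from any disagreeing comparable pair $\{x,y\}$, say $y <_V x$ but $x <_{V_\ast} y$, if $y \lessdot_V x$ is not a covering relation choose $z$ with $y<_V z<_V x$; totality of $V_\ast$ gives either $x<_{V_\ast}z$, so $\{z,x\}$ disagrees, or $z<_{V_\ast}x$ (hence $z<_{V_\ast}y$), so $\{y,z\}$ disagrees, and in either case the two elements of the new disagreeing pair are joined by a strictly shorter longest $V$-chain; iterating reaches a disagreeing covering relation $b\lessdot_V a$.

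The final step is to perform the edge-move at $b \lessdot_V a$, getting a vertex $V'$ that agrees with $V_\ast$ on $\{a,b\}$, and to show $\operatorname{inv}(V') < \operatorname{inv}(V)$; finitely many such moves then carry $V$ to $V_\ast$. This inequality is the technical heart and, I expect, the main obstacle. The move is not a mere transposition of $a$ and $b$: forming $\overline{V\cup\{b\le a\}}$ adds comparisons by transitive closure through the new equivalence $a\equiv b$, and the resolution then deletes the parallel comparisons that turned into gaps without a link. One must verify that no \emph{new} disagreement with $V_\ast$ is introduced; the deletions can only help, and the danger lies with the added comparisons, which are squeezed, in the $V_\ast$-order, between the positions of $a$ and $b$ and should thus respect $V_\ast$. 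If a careless choice of $b \lessdot_V a$ does manufacture a new disagreement, I would refine the choice --- e.g.\ take the disagreeing covering relation whose lower element is $V_\ast$-minimal --- or replace $\operatorname{inv}$ by a weighted count; alternatively one can run the same descent against the lattice structure, arranging that a suitable edge-move out of $U$ strictly decreases $\operatorname{inv}(U\vee V)$, with $U\vee V$ the join of Lemma~\ref{join}.
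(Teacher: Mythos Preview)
Your approach is different from the paper's, and the step you yourself flag as ``the main obstacle'' is a genuine gap: the edge-move at an arbitrary disagreeing covering relation need \emph{not} decrease $\operatorname{inv}$. Take $C(4,2)$ with $V_\ast = (m_1 < m_2 < m_3 < l_1)$ and the vertex $V$ whose Hasse diagram is $l_1 \lessdot m_1 \lessdot m_2$, $l_1 \lessdot m_3 \lessdot m_2$, with $m_1 \# m_3$ (the gap $m_2$ makes them incomparable, and $l_1$ is not a link since $l_1$ lies below both). Then $\operatorname{inv}(V)=4$. The covering relation $l_1 \lessdot_V m_3$ disagrees with $V_\ast$, but the edge-move there produces $V' = (m_3 < l_1 < m_1 < m_2)$: once $m_3 < l_1 < m_1$, the element $l_1$ becomes an orthogonal link and forces the \emph{new} comparison $m_3 <_{V'} m_1$, which is itself an inversion (since $m_1 <_{V_\ast} m_3$). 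One inversion is lost and one is gained, so $\operatorname{inv}(V') = 4 = \operatorname{inv}(V)$. Your heuristic that ``the added comparisons are squeezed, in the $V_\ast$-order, between $a$ and $b$'' fails here: the newly compared element $m_1$ lies \emph{outside} the $V_\ast$-interval from $m_3$ to $l_1$. Your proposed repair --- choosing the disagreeing cover with $V_\ast$-minimal lower element --- happens to work in this example (it selects $m_3 \lessdot m_2$, giving $\operatorname{inv}=3$), but you have not shown it works in general, nor that a weighted inversion count does; as written the argument is incomplete.

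The paper avoids this difficulty by abandoning the inversion count altogether. It runs a selection-sort rather than a bubble-sort: given a target vertex $w$, take its maximal element and swap it to the top of the current Hasse diagram along any upward chain; then, working downward, for each element whose $w$-upper-neighbours are already correctly placed, swap it along a shortest chain to sit directly beneath them. The invariant is that the already-$w$-placed portion of the diagram is never disturbed by later swaps, so the process terminates at $w$. This sidesteps exactly the phenomenon in the counterexample, where swapping an orthogonal cover creates a link and hence a new parallel comparison with unpredictable $V_\ast$-orientation.
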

\begin{proof}
Let $v$ and $w$ be two vertices that we want to connect. We begin the inductive procedure as follows.
Let $a \in \Coll(m,n)$ be the maximal element in $P_w$. We find the same element in $P_v$ and swap edges along (any) sequence connecting $a$ to the top.
Thus we obtain a preorder where $a$ is maximal (as in $P_w$).
Now let us say that an element $x \in \Coll(m,n)$ is \emph{$w$-placed} if all its upgoing Hasse edges are the same as in $P_w$ (thus the base of induction consisted of $w$-placing $a$).
Let $P'_v$ denote the modified preorder.
Let $s \in \Coll(m,n)$ be such that all its upper $w$-neighbours are $w$-placed in $P'_v$. Let $t$ be one of those $w$-neighbours which is not a neighbour in $P'_v$. Choose the shortest path connecting $s$ to $t$ in $P'_v$, and swap edges along that.
This procedure does not affect the $w$-placed part of the diagram.
Therefore we can continue until all the collisions are $w$-placed.
\end{proof}

\begin{lem}
\label{connect2}
Any two vertices inside one facet of $C(m,n)$ are connected by a sequence of edges within that facet.
\end{lem}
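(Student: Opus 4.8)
The plan is to mimic the proof of Lemma~\ref{connect}, but to run the entire inductive ``$w$-placing'' procedure \emph{inside} a fixed facet $F$. Recall that a facet is a meet-irreducible, i.e.\ a preorder with exactly two equivalence classes; so $F$ partitions $\Coll(m,n)$ into a ``lower block'' $B_0$ and an ``upper block'' $B_1$ with $B_0 <_F B_1$. A vertex $v$ lies in $F$ precisely when $P_v$ refines $F$, which means the two blocks $B_0, B_1$ are each \emph{down-closed}, resp.\ \emph{up-closed}, in the Hasse diagram of $P_v$: every collision in $B_0$ sits strictly below every collision in $B_1$ according to $P_v$. The key observation is that the edge-swap move of Lemma~\ref{connect}, applied to a covering relation $a \lessdot b$ of $P_v$ with \emph{both} $a,b$ in the same block $B_k$, produces a new vertex preorder that still refines $F$: replacing $a<b$ by $a\equiv b$ (and adjusting) cannot move $a$ or $b$ across the block boundary, since the boundary comparisons $B_0 < B_1$ are untouched and remain valid after the adjustment (the adjustment only deletes comparisons made redundant or adds covering relations among already-comparable elements, by the two lemmas on edges above).

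Concretely, I would first handle the upper block. Restricting attention to $B_1$, the induced sub-Hasse-diagrams of $P_v$ and $P_w$ are themselves ``vertex-like'' configurations, and I want to run the Lemma~\ref{connect} induction \emph{within} $B_1$: pick the maximal element $a$ of $P_w$ (which necessarily lies in $B_1$, since $B_1$ is the up-closed block and is nonempty), locate it in $P_v$, and swap edges along a path from $a$ to the top of $P_v$ — noting that this path stays inside $B_1$ because $B_1$ is up-closed. This $w$-places $a$. Continuing as in Lemma~\ref{connect}, one $w$-places all of $B_1$ using only edge-swaps internal to $B_1$, each of which keeps us inside $F$. Then repeat the same argument for the lower block $B_0$: here one should run the dual procedure, $w$-placing elements from the bottom up (working with the \emph{minimal} element of $P_w|_{B_0}$ and swapping along downgoing paths, which stay in the down-closed block $B_0$). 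Since $B_0$ and $B_1$ are comparable-across and the moves never cross the boundary, after both phases all collisions are $w$-placed and the resulting vertex equals $w$; the entire sequence of edges lies in $F$.

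The main obstacle I anticipate is verifying carefully that an edge-swap along a covering relation $a \lessdot b$ with $a, b \in B_k$ genuinely keeps the vertex inside $F$ — i.e.\ that the ``adjustment'' step (deleting illegitimate comparisons, adding newly-covering legitimate ones, per the lemma preceding Lemma~\ref{connect}) never deletes a comparison of the form $x <_{P_v} y$ with $x\in B_0$, $y\in B_1$, and never needs to add one in the wrong direction. This should follow because such cross-block comparisons are forced by $F$ itself and hence are present in \emph{every} vertex of $F$ regardless of the Hasse structure within blocks; deleting one would only happen if it became non-covering, in which case it is recovered by transitivity, and a new cross-block comparison in the reverse direction would contradict $B_0 <_F B_1$. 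I would isolate this as a short preliminary claim — ``if $v\in F$ and $v'$ is obtained from $v$ by an edge-swap along a covering relation internal to a block of $F$, then $v'\in F$'' — and then the rest is a routine bookkeeping rerun of Lemmas~\ref{connect} within each block, applied to $B_1$ and (dually) to $B_0$ in turn.
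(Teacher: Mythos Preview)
Your proposal is correct and follows essentially the same idea as the paper: both argue that the edge-swap procedure of Lemma~\ref{connect} never crosses the block boundary $C_1 <_F C_2$ defining the facet, so the resulting path of edges stays inside $F$. The paper's proof is considerably shorter---it simply observes that since both endpoints $v,w$ satisfy $x<y$ for all $x\in C_1$, $y\in C_2$, the algorithm of Lemma~\ref{connect} run \emph{as is} never asks to swap a $C_1$-element with a $C_2$-element; your block-by-block treatment (handle $B_1$ top-down, then run a dual bottom-up procedure on $B_0$) and your isolated preliminary claim are more scaffolding than the paper uses, though not wrong.
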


\begin{proof}
A facet $F$ is given by two equivalence classes, $C_1 <_F C_2$.
A vertex $v$ belongs to $F$ if and only if for every $x \in C_1$ and $y \in C_2$ we have $x <_v y$.
For two vertices satisfying this, the algorithm explained in the proof of Lemma \ref{connect} never requires to swap collisions from $C_1$ and $C_2$, so the edge sequence stays within $F$.
\end{proof}

\section{A convex hull realization of \texorpdfstring{$C(m,n)$}{C(m,n)}}
\label{sec:polytopes}

We now provide an explicit polytopal realization of constrainahedra, by giving formulas for vertex coordinates.
These formulas emerged from ongoing joint work with Spencer Backman.

Fix a vertex $v \in C(m,n)$.
To this vertex, we will associate:
\begin{itemize}
    \item 
    {\em horizontal coordinates} $y_1$, $\ldots$, $y_{n-1}$ (with $y_i$ corresponding to the collision $l_i$), and
    
    \item
    {\em vertical coordinates} $x_1$, $\ldots$, $x_{m-1}$ (with $x_j$ corresponding to the collision $m_j$).
\end{itemize}
Every coordinate (no matter horizontal or vertical) will be obtained as a product $W_1W_2T$ of three nonnegative integers, where $W_1$ is the {\em first weight}, $W_2$ is the {\em second weight}, and $T$ is the {\em thickness}.

To give the definitions of these numbers, we need some additional terminology.

\begin{defi}
A \emph{partial binary bracketing (PBB)} is an arrangement of brackets obtained from a binary bracketing by removing some brackets in such a way that every remaining bracket is binary.
\null\hfill$\triangle$
\end{defi}

\noindent
For example, $ab(cd)$ and $((ab)(cd))$ are PBBs, and $(ab(cd))$ is not a PBB.

\begin{defi}
The \emph{thickness} of a PBB is the number of pairs $(a_i, a_j)$ such that there exists a bracket embracing (at any depth) both $a_i$ and $a_j$, plus 1.
\null\hfill$\triangle$
\end{defi}

\noindent
In the examples above,  $ab(cd)$ has weight 2 with the only such pair being $(c,d)$, and $((ab)(cd))$ has weight $7 = 1 + \binom{4}{2}$, because every pair of letters is embraced by some bracket.

Next, we define agglomerations of lines.

\begin{defi}
For a collision $l_i$, the \emph{agglomeration $|\!\operatorname{Ag}_{l_i}(L_i)|$ of $L_i$} is the set of lines that have collided with $L_i$ earlier than $l_i$, and the \emph{agglomeration $|\!\operatorname{Ag}_{l_i}(L_{i+1})|$  of $L_{i+1}$} consists of lines that have collided with $L_{i+1}$ earlier than $l_i$.
Agglomerations for collisions $m_j$ are defined similarly.
\null\hfill$\triangle$
\end{defi}

We are now ready to give the formulas for the vertex coordinates.

\begin{defi}
We define the $i$-th horizontal coordinate $y_i$, associated to $v \in C(m,n)$ a vertex and corresponding to the collision $l_i$, by $y_i \coloneqq W_1W_2T$, where:
\begin{itemize}
    \item
    $W_1 \coloneqq |\!\operatorname{Ag}_{l_i}(L_i)|$,
    
    \smallskip
    
    \item
    $W_2 \coloneqq |\!\operatorname{Ag}_{l_i}(L_{i+1})|$, and
    
    \smallskip
    
    \item
    $T$ is the thickness of the PBB whose elements are lines $M_j$ and whose brackets come from vertical collisions that happened before $l_i$.
\end{itemize}

\noindent
We define $x_j$ in a completely analogous way.
That is, if $x_j$ corresponds to the collision $m_j$, then we set $x_j \coloneqq W_1W_2T$, where:
\begin{itemize}
    \item
    $W_1 \coloneqq |\!\operatorname{Ag}_{m_j}(M_j)|$,
    
    \smallskip
    
    \item
    $W_2 \coloneqq |\!\operatorname{Ag}_{m_j}(M_{j+1})|$, and
    
    \smallskip
    
    \item
    $T$ is the thickness of the PBB whose elements are lines $L_i$ and whose brackets come from horizontal collisions that happened before $m_j$.
\end{itemize}

\noindent
We will denote the point with coordinates $x_j$ and $y_i$ by $(\bx,\by)_v$, where the subscript indicates the dependence on the vertex $v \in C(m,n)$.
\null\hfill$\triangle$
\end{defi}

\begin{eg}
Consider the following vertex:

\begin{center}
\begin{tikzpicture}

\node (a11) {$a_{11}$};
\node (a12) [right of=a11] {$a_{12}$};
\node (a13) [right of=a12] {$a_{13}$};

\node (a21) [below of =a11] {$a_{21}$};
\node (a22) [right of=a21] {$a_{22}$};
\node (a23) [right of=a22] {$a_{23}$};

\node (a31) [below of =a21] {$a_{31}$};
\node (a32) [right of=a31] {$a_{32}$};
\node (a33) [right of=a32] {$a_{33}$};

\draw[rounded corners] ($(a21)+(-0.3,0.2)$) rectangle ($(a31)+(0.3,-0.2)$);
\draw[rounded corners] ($(a22)+(-0.3,0.2)$) rectangle ($(a32)+(0.3,-0.2)$);
\draw[rounded corners] ($(a23)+(-0.3,0.2)$) rectangle ($(a33)+(0.3,-0.2)$);

\draw[rounded corners] ($(a22)+(-0.4,0.3)$) rectangle ($(a33)+(0.4,-0.3)$);
\draw[rounded corners] ($(a12)+(-0.4,0.2)$) rectangle ($(a13)+(0.4,-0.2)$);

\draw[rounded corners] ($(a11)+(-0.4,0.3)$) rectangle ($(a31)+(0.4,-0.4)$);
\draw[rounded corners] ($(a12)+(-0.5,0.3)$) rectangle ($(a33)+(0.5,-0.4)$);

\draw[rounded corners] ($(a11)+(-0.5,0.4)$) rectangle ($(a33)+(0.6,-0.5)$);

\begin{scope}[shift = {(5,0.5)}]
\node (1) {$m_1$};
\node (2) [below of = 1] {$l_1$};
\node (3) [below of = 2] {$m_2$};
\node (4) [below of = 3] {$l_2$};

\draw[thick] (1) -- (2) -- (3) -- (4);
\end{scope}

\end{tikzpicture}
\end{center}

\noindent
We compute the coordinates by the procedure explained above.

\begin{enumerate}
    \item
    For $x_1$, we have $l_1$, $l_2$ and $m_2$ that happened earlier than $m_1$.
    So $W_1 = 1$ because $\operatorname{Ag}_{m_1}(M_1) = \{ M_1 \}$, $W_2 = 2$ because $\operatorname{Ag}_{m_1}(M_2) = \{M_2, M_3 \}$ by $m_2$, and $T = 1+ \binom{3}{1} = 4$ because the the horizontal PBB is $(L_1 (L_2 L_3))$ by $l_1$ and $l_2$, with every pair contributing to thickness.
    So $x_2 = 1 \cdot 2 \cdot 4 = 8$.
    
    \medskip
    
    \item For $x_2$, we have $l_2$ that happened earlier than $m_2$.
    So $W_1 = 1$ because $\operatorname{Ag}_{m_2}(M_2) = \{ M_2 \}$, $W_2 = 1$ because $\operatorname{Ag}_{m_2}(M_3) = \{ M_3 \}$, and $T = 1+1 = 2$ because the horizontal PBB is $L_1 (L_2 L_3)$ with $(L_2,L_3)$ being the pair that collided through $l_2$ and contributes to thickness.
    Thus $x_2 = 1 \cdot 1 \cdot 2 = 2$.
    
    \medskip
    
    \item For $y_1$, we have $l_2$ and $m_2$ that happened earlier than $l_1$.
    So $W_1 = 1$ because $\operatorname{Ag}_{l_1}(L_1) = \{ L_1 \}$, $W_2 = 2$ because $\operatorname{Ag}_{l_1}(L_2) = \{L_2, L_3 \}$ by $l_2$, and $T = 1+1 = 2$ because the vertical PBB is $M_1 (M_2 M_3)$ with $(M_2,M_3)$ being the pair that collided through $m_2$ and contributes to thickness.
    Thus $y_1 = 1 \cdot 2 \cdot 2 = 4$.
    
    \medskip
    
    \item
    For $y_2$, we have no collision that happened earlier than $l_2$.
    So $W_1 = 1$ because $\operatorname{Ag}_{l_2}(L_2) = \{ L_2 \}$, $W_2 = 1$ because $\operatorname{Ag}_{l_2}(L_3) = \{ L_3 \}$, and $T = 1$ because the vertical PBB is $M_1 M_2 M_3$ with no brackets.
    Thus $y_2 = 1 \cdot 1 \cdot 1 = 1$.
\end{enumerate}

\noindent
Thus $(\bx,\by)_v = (8,2,4,1)$.
\null\hfill$\triangle$
\end{eg}

Our main result is the following theorem:

\begin{theo}
The convex hull of the points $(\bx,\by)_v$, as $v$ varies over the vertices in $C(m,n)$, is a polytope whose face poset is isomorphic to $C(m,n)$.
\end{theo}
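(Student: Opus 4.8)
The plan is to show that the map $v \mapsto (\bx,\by)_v$ realizes $C(m,n)$ as the face lattice of the convex hull $\cP$ of the image points. The strategy has three components: (i) identify a family of supporting hyperplanes indexed by the facets (meet-irreducibles) of $C(m,n)$; (ii) show that each such hyperplane cuts out exactly the vertices lying in the corresponding facet; and (iii) use the combinatorial structure of the lattice — via Lemma (Exercise 3.27) and the edge-connectedness results of \S\ref{sec:lattice} — to conclude that the face lattice of $\cP$ equals $C(m,n)$.

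\smallskip

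First I would analyze the coordinates more carefully. Recall that a facet preorder $F$ is determined by a partition of $\Coll(m,n)$ into two equivalence classes $C_1 <_F C_2$. I want to produce a linear functional $\ell_F$ and a constant $c_F$ such that $\ell_F((\bx,\by)_v) \le c_F$ for all vertices $v$, with equality precisely when $v$ refines $F$ (i.e.\ when every element of $C_1$ precedes every element of $C_2$ in $P_v$). The natural candidate for $\ell_F$ is a sum, over the collisions in $C_2$ (say), of the corresponding coordinates, possibly with combinatorially-determined weights reflecting the bracket/agglomeration structure forced by ``everything in $C_1$ happens first.'' The key computation is: if $v$ refines $F$, then for a collision in $C_2$ the agglomerations and the PBB appearing in its coordinate are determined by $C_1$ (the set of collisions that definitely precede it), so the coordinate takes a \emph{fixed} value $W_1 W_2 T$ depending only on $F$; summing these gives $c_F$. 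If $v$ does \emph{not} refine $F$, then some collision of $C_2$ occurs "too early," which strictly decreases the relevant product (fewer lines have agglomerated, fewer brackets are present, so $W_1$, $W_2$, or $T$ drops), while compensating increases elsewhere are controlled — this is where the specific product form $W_1 W_2 T$, rather than a sum, does the work, because the "thickness" $T$ is itself a sum of pair-counts that behaves monotonically under refinement.

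\smallskip

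With the facet hyperplanes in hand, the remaining steps are more structural. I would first check that all the points $(\bx,\by)_v$ are distinct and are vertices of $\cP$ (no point is a convex combination of the others) — this follows once we know each $v$ is the unique maximizer of, e.g., a generic perturbation of $\sum_F \ell_F$, or more directly from the facet description: the set of facet-hyperplanes through $(\bx,\by)_v$ determines $v$ because $C(m,n)\sqcup\{-1\}$ is a lattice and, by the Stanley exercise, $C(m,n)$ is recovered from its irreducibles, so in particular a vertex is the meet of the facets above it. Then I would argue that the faces of $\cP$ are in order-preserving bijection with $C(m,n)$: an arbitrary face $G$ of $\cP$ is the intersection of $\cP$ with some of the facet hyperplanes, hence corresponds to the meet in $C(m,n)\sqcup\{-1\}$ of the corresponding facet preorders; conversely every element $p \in C(m,n)$ is a meet of facets (again by the lattice property and the irreducibles lemma, since every element is a meet of meet-irreducibles), and the vertices refining $p$ span a genuine face of $\cP$ of the right dimension. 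Here Lemma~\ref{connect2} is what guarantees the vertex set of each such face is "correct" in the sense that it is exactly $\{v : v \le p\}$ and these form a connected (hence full-dimensional-within-the-face) configuration; Lemma~\ref{connect} plays the analogous role for $\cP$ itself being of the expected dimension $n+m-2$.

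\smallskip

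The main obstacle I anticipate is step (ii): proving that the facet functional $\ell_F$ is \emph{strictly} maximized exactly on the vertices refining $F$, with no spurious equalities. This requires a genuinely combinatorial inequality comparing the multiset of products $\{W_1W_2T\}$ across the collisions of $C_2$ for a vertex $v$ that refines $F$ versus one that does not, and controlling how "un-refining" $F$ — say by letting one collision of $C_2$ slip before some collision of $C_1$ — redistributes agglomeration mass and bracket structure among all the coordinates. The product structure and the precise definition of thickness as $1 + (\text{number of embraced pairs})$ are presumably tuned exactly so that this works; verifying it is likely the technical heart of the argument. A secondary, more routine obstacle is confirming that the resulting polytope has full dimension $n+m-2$ and that there are no "hidden" facets beyond the $\ell_F$'s — but this follows from a dimension count plus the fact that every face of $\cP$ has been matched to an element of $C(m,n)$.
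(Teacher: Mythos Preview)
Your overall architecture matches the paper's: one supporting hyperplane per facet, with the functional being a coordinate sum, and then the lattice structure of $C(m,n)\sqcup\{-1\}$ (plus the irreducibles lemma) to pass from facets to the full face poset. The paper carries this out via Lemmas~\ref{hyperplane}, \ref{facet_type_1}, \ref{facet_type_2}, \ref{facet_type_3}.

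However, there is a genuine gap in your step (ii). You assert that if $v$ refines $F$, then for each collision in $C_2$ ``the agglomerations and the PBB appearing in its coordinate are determined by $C_1$ \ldots so the coordinate takes a fixed value $W_1W_2T$ depending only on $F$.'' This is false. A collision $c\in C_2$ is preceded not only by all of $C_1$ but also by those elements of $C_2$ that come before $c$ in $P_v$; hence the individual coordinate of $c$ varies as $v$ ranges over vertices of $F$. (Already in the associahedron $C(4,1)$ with $C_1=\{m_1\}$, $C_2=\{m_2,m_3\}$: the two vertices $m_1<m_2<m_3$ and $m_1<m_3<m_2$ give $(x_2,x_3)=(2,3)$ and $(4,1)$ respectively.) What is true is that the \emph{sum} of these coordinates is constant on the facet, and that is what has to be proved.

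The paper handles this by summing over $C_1$ rather than $C_2$ (the two are equivalent, since Lemma~\ref{hyperplane} fixes the total sum) and then arguing in two steps: first compute the $C_1$-sum explicitly for one convenient vertex of $F$; second, use Lemma~\ref{connect2} to walk between any two vertices of $F$ along edges inside $F$, and observe that the edge computations carried out in the proof of Lemma~\ref{hyperplane} show the $C_1$-sum is invariant along each such edge (the two coordinates that change both lie in $C_1$, and their sum is preserved). So Lemmas~\ref{connect} and \ref{connect2} are doing the real work precisely here --- establishing the \emph{equality} on the facet --- not in the structural wrap-up where you invoke them.

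Correspondingly, you have the difficulty backwards. The strict inequality outside $F$ is the easy direction in the paper's setup: if $v\not\le F$ then some collision of $C_2$ precedes some collision of $C_1$, and this strictly increases an agglomeration size or a thickness entering some $C_1$-coordinate, by straightforward monotonicity; one then compares to an auxiliary vertex of $F$ that agrees with $v$ on $C_1$. No delicate ``redistribution'' analysis is needed. The part you flagged as routine (constancy on the facet) is the part that actually requires the edge-invariance machinery.
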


\begin{proof}
Combine Lemmas \ref{hyperplane}, \ref{facet_type_1}, \ref{facet_type_2}, and \ref{facet_type_3}.
\end{proof}

The lemmas used in the proof of this theorem implicitly describe the normal fan and the support function of our polytopal realization of $C(m,n)$.

\begin{lem}
\label{hyperplane}
For any $v \in C(m,n)$, the point $(\bx,\by)_v$ lies in the hyperplane 
\begin{align}
\left\{
(\bx,\by)
\in
\bR^{m-1}\times\bR^{n-1}
\:\left|\:
\sum_{i=1}^{m-1} x_i + \sum_{j=1}^{n-1} y_j
=
\binom{n}{2}\binom{m}{2} + \binom{n}{2} + \binom{m}{2}
\right.
\right\}.
\end{align}
\end{lem}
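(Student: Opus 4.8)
The plan is to evaluate the linear functional $(\bx,\by)\mapsto \sum_i x_i + \sum_j y_j$ on each vertex $(\bx,\by)_v$ by rewriting the three factors $W_1$, $W_2$, $T$ of every coordinate in terms of \emph{when pairs of parallel lines get merged}, and then recognizing the resulting cross-terms via {\sc (Orthogonal comparability)}.

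Fix a vertex $v\in C(m,n)$; by the lemma characterizing vertices, $v$ is a poset in which $\equiv$ coincides with $=$. Forgetting the orthogonal comparisons, the horizontal collisions $l_1,\ldots,l_{n-1}$ induce an honest full binary bracketing of $L_1,\ldots,L_n$, exactly as in the proof identifying $C(n,1)$ with the face poset of the associahedron: processing the $l_i$ in the order prescribed by $v$ builds a nested family of agglomerations, $\on{Ag}_{l_i}(L_i)$ and $\on{Ag}_{l_i}(L_{i+1})$ are distinct blocks just before $l_i$ of sizes $W_1(l_i)$ and $W_2(l_i)$, and $l_i$ merges them. Set $N(l_i):=W_1(l_i)W_2(l_i)$; this is the number of unordered pairs $\{L_a,L_b\}$ merged exactly at $l_i$, i.e.\ lying in distinct agglomerations before $l_i$ and a common one afterward. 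Since agglomerations only grow and $L_1,\ldots,L_n$ end up in a single block, each pair is merged at a unique horizontal collision, whence
\begin{align}
\sum_{i=1}^{n-1} N(l_i)=\binom{n}{2},\qquad \sum_{j=1}^{m-1} N(m_j)=\binom{m}{2},
\end{align}
the second identity being the symmetric statement with $N(m_j):=W_1(m_j)W_2(m_j)$.

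Next I rewrite the thickness factor. The brackets of the vertical PBB attached to $l_i$ are precisely the agglomerations created by the vertical collisions $m_j<_v l_i$; two lines $M_a,M_b$ lie in a common bracket exactly when $\{M_a,M_b\}$ has already been merged by one such collision, which is unique. Hence the number of pairs embraced by a common bracket is $\sum_{m_j<_v l_i} N(m_j)$, so
\begin{align}
T(l_i)=1+\sum_{m_j<_v l_i} N(m_j),\qquad\text{and thus}\qquad y_i=N(l_i)+\sum_{m_j<_v l_i} N(l_i)N(m_j).
\end{align}
Summing over $i$ and invoking the first display, $\sum_i y_i=\binom{n}{2}+\sum_{(i,j):\,m_j<_v l_i} N(l_i)N(m_j)$; symmetrically $\sum_j x_j=\binom{m}{2}+\sum_{(i,j):\,l_i<_v m_j} N(l_i)N(m_j)$.

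Adding these, the cross-terms combine into $\sum N(l_i)N(m_j)$ taken over all pairs $(i,j)$ with $l_i$ and $m_j$ \emph{comparable} in $v$. But $l_i$ and $m_j$ are orthogonal, so {\sc (Orthogonal comparability)} forces them to be comparable; they are also distinct and not equivalent (as $v$ is a poset), so exactly one of $l_i<_v m_j$, $l_i>_v m_j$ holds, and \emph{every} pair $(i,j)$ contributes. The cross-term therefore equals $\bigl(\sum_i N(l_i)\bigr)\bigl(\sum_j N(m_j)\bigr)=\binom{n}{2}\binom{m}{2}$, giving $\sum_i x_i+\sum_j y_j=\binom{n}{2}\binom{m}{2}+\binom{n}{2}+\binom{m}{2}$. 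The one point requiring care is the bookkeeping in the second paragraph --- that a vertex genuinely induces a binary bracketing on the horizontal (and, separately, vertical) lines, so that the relevant agglomerations are disjoint blocks of the claimed sizes and each parallel pair is merged exactly once; this is the associahedron half of the comparison theorem already proved above, so it may simply be cited, after which the computation is purely formal.
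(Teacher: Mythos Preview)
Your proof is correct and takes a genuinely different route from the paper's.

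The paper argues indirectly: it computes the coordinate sum at one explicit vertex $v_0$ (all $l$'s first, then all $m$'s), then shows the sum is invariant along every edge by a three-case analysis, and finally invokes edge-connectedness (Lemma~\ref{connect}) to propagate the value to every vertex. Your argument is direct: you observe that $y_i = N(l_i)\,T(l_i)$ with $N(l_i)=W_1W_2$ counting the horizontal pairs merged exactly at $l_i$ and $T(l_i)=1+\sum_{m_j<_v l_i}N(m_j)$, then sum and use {\sc (Orthogonal comparability)} together with the absence of nontrivial equivalences in a vertex poset to see that every cross-term $N(l_i)N(m_j)$ shows up exactly once. This is cleaner, avoids the edge classification entirely, and does not need Lemma~\ref{connect}. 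Conversely, the paper's edge computation is reused verbatim in Lemmas~\ref{facet_type_1}--\ref{facet_type_3} (via Lemma~\ref{connect2}), so its approach amortizes better across the section.

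One small point of care: your appeal to ``the associahedron half of the comparison theorem'' is slightly loose. That theorem is about $C(n,1)$, whereas here you need that in any vertex of $C(m,n)$ the horizontal collisions alone still induce a genuine binary bracketing of $L_1,\ldots,L_n$, i.e.\ that every interval $\{l_a,\ldots,l_{b-1}\}$ has a unique element dominating the rest. This does follow quickly from {\sc (Parallel comparability)}: a putative gap between two candidate maxima would already be dominated by one of them, so the ``no gap'' clause forces comparability. It would be worth stating this one-line argument rather than citing the $C(n,1)$ theorem, since orthogonal links in $C(m,n)$ can create parallel comparisons that are absent in $C(n,1)$, and you want to be sure those extra comparisons do not interfere with the merger count.
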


\begin{proof}
We first show that the equality holds for a certain vertex $v_0$ with most computable coordinates, and then show that the sum of  all coordinates doesn't change along an edge.

Let $v_0$ be a vertex with poset $l_1 < \cdots < l_{n-1} < m_1 < \cdots < m_{m-1}$ (first all horizontal lines are collapsed, left to right, then all vertical lines are collapsed, top to bottom).
Then the horizontal coordinates are $1, \ldots, m$ (all computed with thickness $1$), and the vertical coordinates are $1 \cdot (\binom{m}{2} + 1), \ldots, n \cdot \left(\binom{m}{2}+1\right)$ (all computed with thickness precisely $\binom{m}{2}+1$).
So the sum is indeed
\begin{align}
(1 + \cdots + m) + (1+ \cdots +n)\left(\binom{m}{2}+1\right)
=
\binom{n}{2}\binom{m}{2} + \binom{n}{2} + \binom{m}{2}.
\end{align}

Now recall Lemma \ref{connect}.
Let $E$ be some edge.
It corresponds to a preorder where some equivalence class $C$ has cardinality 2.
There are three possibilities: 
\begin{enumerate}
    \item
    $C = \{ l_a, l_b \} $ and $l_i<l_a \equiv l_b$ for $a < i < b$ (by {\sc (parallel comparability)}).
    
    \medskip
    
    \item
    $C = \{ m_a, m_b \} $ and $m_i<m_a \equiv m_b$ for $a < i < b$ (by {\sc (parallel comparability)}).
    
    \medskip
    
    \item
    $C = \{ l_a, m_b \} $, with no conditions.
\end{enumerate}

\noindent
For an edge of type 1, let $v$ be its endpoint with $l_a < l_b$ and let $w$ be its endpoint with $l_a > l_b$.
Then $v$ and $w$ only differ in two horizontal coordinates $y_a$ and $y_b$.
We notice that the thickness of the vertical PBB is the same for $l_a$ and $l_b$, no matter in which order they collide; we denote this quality by $T$.
For $v$, let denote $|\!\operatorname{Ag}_{l_a}(L_a)|$ by $A$, and for $w$, denote $|\!\operatorname{Ag}_{l_b}(L_{b+1})|$ by $B$.
In this notation, for $v$ we have
\begin{align}
y_a(v) = A(b-a)T,
\qquad
y_b(v) = (A + (b-a))BT,
\end{align}
and for $w$ we have
\begin{align}
y_a(w) =  A((b-a)+B)T,
\qquad
y_b(w) = (b-a)BT.
\end{align}
For both vertices, the sum of the two coordinates is equal to
\begin{align}
(A(b-a) + AB + (b-a)B)T.
\end{align}
For an edge of type 2, the argument is the same.

Finally, for an edge of type 3, let $v$ be its endpoint with $l_a < m_b$, and let $w$ be its endpoint with $l_a > m_b$.
Then $v$ and $w$ only differ in coordinates $y_a$ and $x_b$. Let $T_{\mathrm{vert}}$ be the thickness of the vertical PBB by the time of collision $l_a$ in $v$, and let $T_{\mathrm{hor}}$ be the thickness of the horizontal PBB by the time of collision $m_b$ in $w$.
We notice that $|\!\operatorname{Ag}_{l_a}(L_a)|$ and $|\!\operatorname{Ag}_{l_a}(L_{a+1})|$ are the same for $v$ and $w$; denote these qualities by $A_1$ and $A_2$.
Similarly, we notice that $|\!\operatorname{Ag}_{m_b}(M_b)|$ and $|\!\operatorname{Ag}_{m_b}(M_{b+1})|$ are the same for $v$ and $w$; denote these qualities by $B_1$ and $B_2$.
In this notation, for $v$ we have
\begin{align}
y_a(v)
=
A_1A_2T_{\mathrm{vert}},
\qquad
x_b(v)
=
B_1B_2(T_{\mathrm{hor}} + A_1A_2),
\end{align}
and for $w$ we have 
\begin{align}
y_a(w)
=
A_1A_2(T_{\mathrm{vert}}+B_1B_2),
\qquad
x_b(w)
=
B_1B_2T_{\mathrm{hor}}.
\end{align}
For both vertices, the sum of the two coordinates is equal to
\begin{align}
A_1A_2T_{\mathrm{vert}} + A_1A_2B_1B_2 +  B_1B_2T_{\mathrm{hor}}.
\end{align}
\end{proof}

To formulate the next Lemma, we need to classify facets of $C(m,n)$.
Recall that a facet is good rectangular preorder with two equivalence classes, $C_1$ and $C_2 = C(m,n) \setminus C_1$. For {\sc (orthogonal comparability)} and {\sc (parallel comparability)} to be satisfied, $C_1$ can be one of the following:

\begin{enumerate}
    \item
    $C_1 = \{l_i \:|\: i \in I \}$ where $I \subset [1,n-1]$ is a subinterval not equal to all of $[1,n-1]$.
    
    \medskip
    
    \item
    $C_1 = \{m_j \:|\: j \in J\}$ where $J \subset [1,m-1]$ is a subinterval not equal to all of $[1,m-1]$.
    
    \medskip
    
    \item
    $C_1 = \{l_i, m_j \:|\: i \in \bigcup_x I_s, j \in \bigcup_t J_t\}$ where $I_s$ are subintervals of $[1,n-1]$ satisfying $\max I_s < \min I_{s+1}$ and $J_t$ are subintervals of $[1,m-1]$ satisfying $\max J_t < \min J_{t+1}$.
\end{enumerate}

\noindent
We say that the corresponding facets are of types 1, 2, and 3 accordingly.

\begin{lem}
\label{facet_type_1}
Let $F \in C(m,n)$ be facet of type 1 with $C_1 = \{l_i \:|\: i \in I \}$.
Set $a \coloneqq |I|+1$.
Then for every vertex $v$ in  $F$ we have 
\begin{align}
\sum_{i \in I} y_i = \binom{a}{2}.
\end{align}
For vertex $w$ outside $F$ we have 
\begin{align}
\sum_{i \in I} y_i > \binom{a}{2}.
\end{align}
\end{lem}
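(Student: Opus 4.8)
The plan is to read the quantity $\sum_{i\in I}y_i$ as a count of pairs of horizontal lines that become connected for the first time at one of the collisions $l_i$ with $i\in I$. Write $I=[p,q]$, so that $a=|I|+1=q-p+2$ is the number of horizontal lines $L_p,\dots,L_{q+1}$, and call a pair $\{L_s,L_t\}$ with $p\le s<t\le q+1$ an \emph{internal pair}; thus $\binom a2$ counts exactly the internal pairs. I take $F$ to be the type-$1$ facet in which the collisions $l_i$ ($i\in I$) occur before all others, i.e.\ $C_1=\{l_i\mid i\in I\}$ lies below $C_2$; the condition $v\in F$ then says precisely that in $v$ every collision occurring before some $l_i$ ($i\in I$) is itself an $l_k$ with $k\in I$.

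The first step is a bookkeeping identity valid for any vertex $w$: for a horizontal collision $l_i$, the two agglomerations merged at $l_i$ are disjoint, adjacent intervals of horizontal lines of sizes $W_1$ and $W_2$, so $W_1W_2$ is exactly the number of pairs of horizontal lines that become connected for the first time at $l_i$. Since all $n-1$ horizontal collisions occur, every pair of horizontal lines becomes connected at exactly one collision, so these pair-sets are disjoint over $i$; a pair $\{L_s,L_t\}$ first connected at $l_i$ is internal only if $i\in[s,t-1]\subseteq I$, while conversely every internal pair is first connected at some $l_k$ with necessarily $k\in I$. Hence $\sum_{i\in I}W_1W_2\ge\binom a2$ for every vertex, with equality iff no pair first connected at an $l_i$, $i\in I$, leaves $\{L_p,\dots,L_{q+1}\}$. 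As $T\ge1$ always, this already gives $\sum_{i\in I}y_i=\sum_{i\in I}W_1W_2\,T\ge\binom a2$.

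For a vertex $v\in F$: when $l_i$ ($i\in I$) occurs, only collisions $l_k$ with $k\in I$ have occurred, so no vertical collision has occurred (hence $T=1$) and both merged agglomerations lie inside $\{L_p,\dots,L_{q+1}\}$; moreover the $a-1$ collisions $l_p,\dots,l_q$ reduce these $a$ lines from singletons to a single block, so every internal pair is first connected at exactly one of them. Thus $\sum_{i\in I}y_i=\sum_{i\in I}W_1W_2=\binom a2$. For a vertex $w\notin F$ there is $x\in C_2$ with $x<_w l_i$ for some $i\in I$. If $x=m_j$ is a vertical collision, then the vertical PBB at the moment of $l_i$ contains a binary bracket, so $T\ge2$ there, and together with the identity above $\sum_{i\in I}y_i\ge\binom a2+W_1(i)W_2(i)\ge\binom a2+1$. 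If $x=l_k$ with $k\notin I$, say $k\le p-1$ (the case $k\ge q+1$ is symmetric, with $l_{q+1}$ replacing $l_{p-1}$), then Lemma~\ref{lem:tech}, applied to horizontal collisions, upgrades $l_k<_w l_i$ to $l_s<_w l_i$ for all $s\in[k,i]$; in particular $l_{p-1},l_p,\dots,l_{i-1}$ all precede $l_i$, so $L_{p-1}$ lies in the left agglomeration of $l_i$. Then $\{L_{p-1},L_{i+1}\}$ is a pair first connected at $l_i$ that is not internal, so $\sum_{i\in I}W_1W_2\ge\binom a2+1$ and $\sum_{i\in I}y_i>\binom a2$.

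I expect the delicate point to be the strict inequality in the $x=l_k$ case: one must notice that Lemma~\ref{lem:tech} turns the single hypothesized comparison into the full chain $l_{p-1}<_w\cdots<_w l_i$, and it is this chain — not merely the presence of $l_k$ — that forces an extra, non-internal crossing pair to appear at $l_i$. Everything else (the pair-counting identity, the $T=1$ computation on $F$, and the $T\ge2$ subcase) is routine.
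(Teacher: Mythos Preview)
Your proof is correct and takes a genuinely different route from the paper's.

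The paper argues the equality by computing $\sum_{i\in I}y_i$ at one convenient vertex of $F$ and then invoking the edge-connectedness of $F$ (Lemma~\ref{connect2}) together with the edge-by-edge invariance established inside the proof of Lemma~\ref{hyperplane}; the strict inequality is then argued loosely by saying that an earlier collision from $C_2$ inflates some weight or thickness. You instead give a direct combinatorial interpretation: $W_1W_2$ at $l_i$ counts the pairs of horizontal lines first merged at $l_i$, so $\sum_{i\in I}W_1W_2$ always dominates the number $\binom a2$ of internal pairs, and on $F$ one has $T=1$ and exact equality. For $w\notin F$ you correctly observe (from the definition of refinement) that some $c\in C_2$ satisfies $c<_w l_i$, and then split into the vertical case ($T\ge 2$ forces an extra $+W_1W_2$) and the horizontal case (Lemma~\ref{lem:tech} promotes $l_k<_w l_i$ to the whole chain $l_{p-1},\dots,l_{i-1}<_w l_i$, producing the non-internal pair $\{L_{p-1},L_{i+1}\}$).

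What your approach buys is self-containment: you never touch the edge machinery (Lemmas~\ref{connect}, \ref{connect2}) or the case analysis inside Lemma~\ref{hyperplane}, and the lower bound $\sum_{i\in I}W_1W_2\ge\binom a2$ is uniform over all vertices. What the paper's approach buys is uniformity of method: the same ``check one vertex, propagate along edges'' strategy is reused verbatim for Lemmas~\ref{hyperplane}, \ref{facet_type_1}, \ref{facet_type_2}, and \ref{facet_type_3}, so once the edge computations are done the facet lemmas are essentially free. Your argument also makes the inequality case sharper and more explicit than the paper's sketch.
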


\begin{proof}
Just as in the previous case, we first verify the equality for a vertex whose coordinates are most computable.
Set $r = \min I$, and let $v$ be the vertex with corresponding to the following order where $l$'s happen left to right first inside $C_1$, then inside $C_2$, and then $m$'s happen top to bottom: formally, $l_a < l_b$ either in one of the three cases: $a \in I$ and $b \notin I$, or $a, b \in I$ and $a < b$, or $a, b \notin I$ and $a<b$; $l_a <m_b$ always; and $m_a < m_b$ when $a < b$.
Then the collisions contributing to the coordinates $y_i$ for $i \in I$ all happen with thickness 1, and they are equal to $1$, $2$, $\ldots$, $a$, proving the equality for $v$.
To see that the equality holds for any vertex of $F$, we recall Lemma \ref{connect2}, consider an edge within $F$ and notice that the coordinate change described in the proof of Lemma \ref{hyperplane} happens only among $y_i$ with $i \in I$, thus not affecting $\sum_{i \in I} y_i$.

To prove the inequality, let $w$ be some vertex outside $F$.
Being outside $F$ means that there is a collision $c$ that happened earlier than all the collisions of $C_1$.
This collision contributes to some agglomeration size or some thickness among the coordinates coming from collisions of $C_1$.
Thus it makes $\sum_{i \in I} y_i$ strictly greater in $w$ than in a vertex of $F$ given by restricting the preorder of $w$ to $C_1$, and then ordering other collisions arbitrarily.
\end{proof}

\begin{lem}
\label{facet_type_2}
Let $F \in C(m,n)$ be facet of type 2 with $C_1 = \{m_i \:|\: j \in J \}$.
Denote $b = |J|+1$.
Then for every vertex $v < F$ we have 
\begin{align}
\sum_{j \in J} x_j = \binom{b}{2}.
\end{align}
Furthermore, for any vertex $v$ incomparable with $F$ we have
\begin{align}
\sum_{j \in J} x_j > \binom{b}{2}.
\end{align}
\end{lem}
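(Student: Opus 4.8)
The proof of Lemma~\ref{facet_type_2} should follow the same template as the proof of Lemma~\ref{facet_type_1}, simply transposing the roles of horizontal and vertical lines. First I would identify a vertex $v$ inside $F$ whose coordinates are especially easy to compute: namely the vertex whose associated poset collapses the $m$'s in $J$ first (top to bottom), then the remaining $m$'s in $C_2$ (top to bottom), and only afterward all the $l$'s (left to right). By the same reasoning as before, for this vertex each collision $m_j$ with $j \in J$ happens with horizontal-PBB thickness $1$ and with agglomeration weights contributing the factors that make the coordinates equal to $1, 2, \ldots, b$; hence $\sum_{j \in J} x_j = 1 + 2 + \cdots + b = \binom{b+1}{2}$. (One should double-check the off-by-one: with $b = |J|+1$, the coordinates run $1, \ldots, b$ if there are $b-1 = |J|$ collisions, summing to $\binom{b}{2}$ as stated.)

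Next I would invoke Lemma~\ref{connect2}: any two vertices inside $F$ are joined by a sequence of edges lying within $F$, and along each such edge only coordinates $x_j$ with $j \in J$ (or only coordinates in $C_2$, disjoint from $J$) are altered, and the edge-move computation in the proof of Lemma~\ref{hyperplane} shows the sum of the two changing coordinates is preserved. Since the edges within $F$ never swap a collision of $C_1$ past a collision of $C_2$, the partial sum $\sum_{j \in J} x_j$ is invariant along every such edge, and therefore equals $\binom{b}{2}$ at every vertex of $F$.

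For the strict inequality, let $w$ be a vertex incomparable with $F$. Being incomparable with $F$ means $w$ does not refine $F$, i.e.\ there is some collision $c \in C_2$ that happens earlier than (or simultaneously with, in a way that breaks the ordering for) some collision in $C_1 = \{m_j : j \in J\}$ --- concretely, there is a collision not in $C_1$ that occurs before one of the $m_j$, $j \in J$. Then that collision $c$ enlarges at least one of the three factors ($W_1$, $W_2$, or the thickness $T$) entering some coordinate $x_j$ with $j \in J$, strictly increasing it relative to the analogous coordinate in the vertex of $F$ obtained by restricting the preorder of $w$ to $C_1$ and ordering $C_2$ afterward arbitrarily; all other coordinates $x_{j'}$, $j' \in J$, are at least as large. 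Hence $\sum_{j \in J} x_j(w) > \binom{b}{2}$. The one subtlety I would be careful about is the precise combinatorial argument that incomparability with $F$ forces such an ``early outside collision'' to exist and to genuinely inflate one of the three factors; this is the analogue of the corresponding step in Lemma~\ref{facet_type_1} and is where I expect the only real work to lie, everything else being a transpose of the already-established argument.
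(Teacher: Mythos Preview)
Your proposal is correct and follows exactly the paper's approach: the paper's entire proof of Lemma~\ref{facet_type_2} is the single sentence ``The proof is identical to the proof of the previous lemma,'' and you have faithfully written out that transposed argument. Your parenthetical catching the off-by-one (there are $|J|=b-1$ coordinates, equal to $1,2,\ldots,b-1$, summing to $\binom{b}{2}$) is in fact sharper than the paper's own exposition of the type~1 case, which writes ``$1,2,\ldots,a$'' where ``$1,2,\ldots,a-1$'' is meant.
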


\begin{proof}
The proof is identical to the proof of the previous lemma.
\end{proof}

\begin{lem}
\label{facet_type_3}
Let $F \in C(m,n)$ be facet of type 3 with $C_1 = \{l_i, m_j \:|\: i \in \bigcup_{s} I_s, j \in \bigcup_t J_t\}$.
Denote $a_s \coloneqq |I_s|+1$ and $b_t \coloneqq |J_t|+1$.
Then for every vertex $v < F$ we have 
\begin{align}
\sum_{i \in \bigcup_{s} I_s} y_i + \sum_{j \in \bigcup_{t} J_t} x_j =  \sum_s \binom{a_s}{2} + \sum_t \binom{b_t}{2} + \sum_{s,t} \binom{a_s}{2} \binom{b_t}{2}.
\end{align}

\noindent
Furthermore, for any vertex $v$ incomparable with $F$ we have
\begin{align}
\sum_{i \in \bigcup_{s} I_s} y_i + \sum_{j \in \bigcup_{t} J_t} x_j >  \sum_s \binom{a_s}{2} + \sum_t \binom{b_t}{2} + \sum_{s,t} \binom{a_s}{2} \binom{b_t}{2}.
\end{align}
\end{lem}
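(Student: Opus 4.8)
The plan is to follow the same two-part template already used for facets of type 1 and 2: first verify the claimed equality for one carefully chosen vertex $v_0$ of $F$ whose coordinates are easy to read off, then propagate the equality to every vertex of $F$ using edge-connectedness within $F$ (Lemma \ref{connect2}) together with the coordinate-change bookkeeping from the proof of Lemma \ref{hyperplane}; finally, prove the strict inequality for vertices incomparable with $F$ by exhibiting an ``extra contribution'' coming from a collision outside $C_1$ that happened before all of $C_1$.

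For the equality at a base vertex: let $v_0$ be the vertex of $F$ in which, within $C_1$, the collisions happen interval-by-interval — all $l_i$ with $i$ in the first $I_s$ left to right, then all $m_j$ in the first $J_t$ top to bottom, and so on, interleaved so that the resulting preorder restricted to $C_1$ has all of $C_1$ strictly below all of $C_2$, with $C_2$ ordered arbitrarily (say all remaining $l$'s then all remaining $m$'s). The key point is that for $v_0$ the agglomerations that enter each $y_i$ (for $i \in I_s$) involve only lines $L_k$ with $k$ in $I_s$'s block, so the $W_1 W_2$ factors run over $1,2,\ldots,a_s$ as within a single associahedron vertex, while the thickness $T$ entering $y_i$ counts exactly the pairs of vertical lines agglomerated by the $m_j$'s of $C_1$ that precede $l_i$ — and by the block structure this contributes $\binom{b_t}{2}$ for each completed block $J_t$. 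Summing, $\sum_{i\in I_s} y_i$ contributes $\binom{a_s}{2}$ from the ``thickness $1$'' part plus $\binom{a_s}{2}\sum_t \binom{b_t}{2}$ once all vertical blocks are collapsed; symmetrically for $\sum_{j\in J_t} x_j$; the cross terms $\binom{a_s}{2}\binom{b_t}{2}$ are counted once (from the side that collapses second), giving exactly the stated right-hand side. This is the step I expect to require the most care: one must choose the within-$C_1$ order on $v_0$ so that every horizontal block is entirely collapsed before it contributes thickness to a vertical coordinate and vice versa, i.e.\ that the cross terms are double-counted in a controlled way and then corrected — getting this bookkeeping exactly right, rather than off by the cross terms, is the crux.

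For invariance along edges inside $F$: by Lemma \ref{connect2} any two vertices of $F$ are joined by edges staying in $F$, and an edge of $F$ flips an equivalence class of size $2$ contained entirely in $C_1$ (or entirely in $C_2$). If the flipped pair lies in $C_2$ it does not touch any coordinate indexed by $C_1$, so the left-hand side is unchanged. If it lies in $C_1$, it is an edge of type 1, 2, or 3 in the sense of Lemma \ref{hyperplane}'s proof, and the computation there shows the sum of the (at most two) affected coordinates is symmetric in the two endpoints; since both affected coordinates are indexed by $C_1$, the quantity $\sum_{i\in\bigcup I_s} y_i + \sum_{j\in\bigcup J_t} x_j$ is preserved. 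Hence the equality holds at every vertex of $F$.

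For the strict inequality: let $w$ be incomparable with $F$, so there is a collision $c \in C_2$ that in $P_w$ happens before \emph{every} collision of $C_1$. Restricting $P_w$ to $C_1$ and extending arbitrarily on $C_2$ produces a vertex $v' \in F$, at which the left-hand side equals the stated constant. Comparing $w$ with $v'$: every agglomeration and every thickness entering a $C_1$-coordinate in $w$ contains the corresponding one for $v'$, and $c$ — being orthogonal to, or parallel to, at least one collision of $C_1$ and earlier than all of $C_1$ — enlarges at least one $W_1$, $W_2$, or $T$ strictly. Since all factors are positive integers, strict enlargement of one factor strictly increases that coordinate, hence strictly increases the sum, which gives the desired inequality. (This is the same mechanism as in Lemma \ref{facet_type_1}; the only new feature is checking that a collision before all of $C_1$ genuinely perturbs some $C_1$-coordinate even when $c$ is parallel to everything in the relevant block, which follows because such a $c$ then lies in some agglomeration of a $C_1$-collision in its block.)
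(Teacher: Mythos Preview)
Your proposal is correct and follows essentially the same three-step template as the paper: verify the equality at one explicit base vertex of $F$, propagate via Lemma~\ref{connect2} together with the edge computations from Lemma~\ref{hyperplane}, and then argue the strict inequality by comparison with a vertex of $F$ obtained by restricting $P_w$ to $C_1$.

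The one place where you diverge is the choice of base vertex, and here the paper is cleaner. You interleave the blocks ($I_1$, then $J_1$, then $I_2$, then $J_2$, \ldots), which forces you to worry about how the cross terms $\binom{a_s}{2}\binom{b_t}{2}$ get distributed between the horizontal and vertical contributions --- the very bookkeeping you flag as ``the crux.'' The paper instead takes the base vertex in which \emph{all} horizontal collisions of $C_1$ happen first (left to right within each $I_s$), and only then all vertical collisions of $C_1$ (top to bottom within each $J_t$). With that order every $y_i$ for $i\in I_s$ has thickness $1$, so $\sum_{i\in I_s} y_i=\binom{a_s}{2}$; every $x_j$ for $j\in J_t$ has thickness $1+\sum_s\binom{a_s}{2}$, so $\sum_{j\in J_t} x_j=\binom{b_t}{2}\bigl(1+\sum_s\binom{a_s}{2}\bigr)$; summing gives the right-hand side with no double-counting to track. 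Your interleaved order also works (the pairs $(s,t)$ partition into $t<s$ and $s\le t$, so the cross terms still add to $\sum_{s,t}\binom{a_s}{2}\binom{b_t}{2}$), but the paper's choice removes exactly the difficulty you were anticipating.
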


\begin{proof}
Just as in the previous case, we first verify the equality for a vertex whose coordinates are most computable.
Set $r_s = \min I_s$, $q_t = \min J_t$ and again let $v$ be the vertex with corresponding to the order where $l$'s happen left to right first inside $C_1$, then inside $C_2$, then other $l$'s happen left to right, then other $m$'s happen top to bottom.
Formally this means that for $l_i$ and $l_j$, $l_i < l_j$ holds in one of the following cases: $i,j \in L_s$ for some $s$, and $i < j$, or $i \in \bigcup_s I_s$ and $j \notin \cup_s I_s$, or $i,j \notin \bigcup_s I_s$ and $i < j$.
Parallel comparisons between $m$'s are identical.
For $l_i$ and $m_j$, $l_i > m_j$ holds if $i \notin \bigcup_s I_s$ and $j \in \bigcup_t J_t$.
Then the collisions contributing to the coordinates $y_i$ for $i \in I_s$ all happen with thickness 1, and they are equal to $1$, $2$, $\ldots$, $a_s$.
Collisions contributing to the coordinates $x_j$ for $k \in J_t$ happen with thickness $\sum_s \binom{a_s}{2}$, and their weight products are equal to $1$, $2$, $\ldots$, $b_t$.
This proves the equality for $v$.
To see that the equality holds for any vertex of $F$, we recall Lemma \ref{connect2}, consider an edge within $F$ and notice that the coordinate change described in the proof of Lemma \ref{hyperplane} happens only among the coordinates featured in the above sum, thus not affecting it.
\end{proof}

\noindent
This finishes the proof of the theorem.
A biproduct of this proof is the fact that constrainahedra are generalized permutahedra in the sense of \cite{postnikov:generalized_permutahedra}: all their edges have directions $e_i - e_j$, which is one of the characterizations of generalized permutahedra.
Another characterization of generalized permutahedra is that every chamber in their normal fan is a union of chambers in the braid arrangement, which correspond to linear orders.
This is also easily seen for constrainahedra: indeed, a chamber in the normal fan of a constrainahedron corresponds to a vertex good rectangular preorder, and it is a union of chambers in the braid arrangement corresponding to linear orders refining it.

\begin{rem}
For associahedra, the embedding presented in this section is the classical Loday embedding \cite{loday}.
For multiplihedra, the embedding presented in this section is the classical Forcey embedding \cite{forcey} for $q = 1/2$, scaled by $2$ and forced to live in hyperplane, while Forcey constructs a full-dimensional object.
\null\hfill$\triangle$
\end{rem}

\begin{rem}[higher constrainahedra]
All of the constructions in this paper can be easily generalized from dimension 2 to dimension $n$.
Geometrically, higher $n$-constrainahedra encode the collision in a grid consisting of orthogonal hyperplanes in $\mathbb{R}^n$.
So far, we were discussing $n = 2$, while $n = 1$ gives simply associahedra.

Let $\Coll(n_1, \ldots, n_k)$ be the set of collisions $m_i(j)$, where $j$ is allowed to vary from $1$ to $k$ and, for $j$ fixed, $i$ is allowed to vary from $1$ to $n_j-1$. Then the definition of a good $n$-rectangular preorder is repeated verbatim.
We leave the translation of \S\S\ref{sec:main_def}--\ref{sec:lattice} to an interested reader.
Finally, the modification of \S\ref{sec:polytopes} is that instead of just one thickness constant $T$ one is supposed to have $k-1$ of them, $T_1$ to $T_{k-1}$, and then total thickness is $T = T_1\cdots T_{k-1}$.
The proofs are then repeated verbatim.
\null\hfill$\triangle$
\end{rem}

\section{Relation to shuffle product realization}
\label{sec:comparison}

It is natural to wonder about the relationship between constrainahedra and products of two associahedra.
This relationship has been clarified by very recent work of Chapoton--Pilaud in \cite{chapoton_pilaud}: combinatorially, constrainahedra are \emph{shuffle products} of associahedra.

In \cite{chapoton_pilaud}, Chapoton--Pilaud defined the shuffle product of generalized permutahedra.
For such $P \subset \mathbb{R}^m$ and $Q \subset \mathbb{R}^n$, their shuffle product is their direct product followed by a Minkowski sum with a certain zonotope:
\begin{align}
P \star Q
\coloneqq
P \times Q + \sum_{i \in [m], j \in [n]} [e_i,e_{m+j}].
\end{align}
Chapoton--Pilaud showed that as posets, 
\begin{align}
C(m,n)
\simeq
A(m) \star A(n).
\end{align}

However, geometrically the realization in the current paper differs from that of \cite{chapoton_pilaud}.
We now explain the similarity and difference for vertex coordinates. In both papers, a vertex coordinate is expressed via three numbers: two weights $W_1$ and $W_2$, and some third number representing, informally, what happened earlier. In our paper, the formula is
\begin{align}
W_1 \times W_2 \times T.
\end{align}
In \cite{chapoton_pilaud}, the formula is
\begin{align}
W_1 \times W_2 + \widetilde{T},
\end{align}
where $\widetilde{T}$ is the number of nodes in the same PBB that we use to define $T$.

For comparison, consider again the vertex whose coordinates we have computed to be $(x_1,x_2,y_1,y_2) = (8,2,4,1)$:

\begin{center}
\begin{tikzpicture}

\node (a11) {$a_{11}$};
\node (a12) [right of=a11] {$a_{12}$};
\node (a13) [right of=a12] {$a_{13}$};

\node (a21) [below of =a11] {$a_{21}$};
\node (a22) [right of=a21] {$a_{22}$};
\node (a23) [right of=a22] {$a_{23}$};

\node (a31) [below of =a21] {$a_{31}$};
\node (a32) [right of=a31] {$a_{32}$};
\node (a33) [right of=a32] {$a_{33}$};

\draw[rounded corners] ($(a21)+(-0.3,0.2)$) rectangle ($(a31)+(0.3,-0.2)$);
\draw[rounded corners] ($(a22)+(-0.3,0.2)$) rectangle ($(a32)+(0.3,-0.2)$);
\draw[rounded corners] ($(a23)+(-0.3,0.2)$) rectangle ($(a33)+(0.3,-0.2)$);

\draw[rounded corners] ($(a22)+(-0.4,0.3)$) rectangle ($(a33)+(0.4,-0.3)$);
\draw[rounded corners] ($(a12)+(-0.4,0.2)$) rectangle ($(a13)+(0.4,-0.2)$);

\draw[rounded corners] ($(a11)+(-0.4,0.3)$) rectangle ($(a31)+(0.4,-0.4)$);
\draw[rounded corners] ($(a12)+(-0.5,0.3)$) rectangle ($(a33)+(0.5,-0.4)$);

\draw[rounded corners] ($(a11)+(-0.5,0.4)$) rectangle ($(a33)+(0.6,-0.5)$);

\begin{scope}[shift = {(5,0.5)}]
\node (1) {$m_1$};
\node (2) [below of = 1] {$l_1$};
\node (3) [below of = 2] {$m_2$};
\node (4) [below of = 3] {$l_2$};

\draw[thick] (1) -- (2) -- (3) -- (4);
\end{scope}

\end{tikzpicture}
\end{center}

In the realization defined in \cite{chapoton_pilaud}, the same vertex has coordinates $(x_1,x_2,y_1,y_2) = (4,2,3,1)$:
\begin{align}
x_1 = 4 = 1 \times 2 + 2,
\qquad
x_2 = 2 = 1 \times 1 + 1,
\qquad
y_1 = 3 = 1 \times 2 + 1,
\qquad
y_2 = 1 = 1 \times 1 + 0.
\end{align}

\bibliographystyle{alpha}
\small
\bibliography{biblio}

\end{document}